\newtheorem{theorem}{Theorem}
\newtheorem{definition}[theorem]{Definition}
\newtheorem{remark}[theorem]{Remark}
\chardef\bslash=`\\ 
\newcommand{\wh}{\widehat}
\newcommand{\dA}{{\dot A}}
\newcommand{\ti}{\tilde  }
\newcommand{\dom}{\text{\rm{Dom}}}
\newcommand{\calH}{{\mathcal H}}
\newcommand{\calR}{{\mathcal R}}
\renewcommand{\Im}{\text{\rm Im}}
   \def\sN{{\mathfrak N}}
\def\bA{{\mathbb A}}      \def\dC{{\mathbb C}}
      \def\dR{{\mathbb R}}
   \def\cH{{\mathcal H}}
\def\RE{{\rm Re\,}}
\def\Ker{{\rm Ker\,}}
\def\wh{\hat}
\def\uphar{{\upharpoonright\,}}
\DeclareMathOperator{\IM}{Im}
\DeclareMathOperator{\Ext}{Ext}
\newcommand{\eval}[2][\right]{\relax
  \ifx#1\right\relax \left.\fi#2#1\rvert}
\begin{document}

\title[Realization of inverse Stieltjes functions $(-m_\alpha(z))$]{ Realization of inverse Stieltjes functions $(-m_\alpha(z))$ \\ by Schr\"odinger L-systems}

\author{S. Belyi}
\address{Department of Mathematics\\ Troy University\\
Troy, AL 36082, USA\\
}
\curraddr{}
\email{sbelyi@troy.edu}


\author{E. Tsekanovski\u i}
\address{Department of Mathematics\\ Niagara University\\
Lewiston, NY 14109\\ USA}
\email{tsekanov@niagara.edu}

\subjclass{Primary 47A10; Secondary 47N50, 81Q10}
\date{DD/MM/2004}


\keywords{L-system, Schr\"odinger  operator, transfer function, impedance function,  Herglotz-Nevanlinna function, inverse Stieltjes function, Weyl-Titch\-marsh function}

\begin{abstract}
We study L-system realizations of the original Weyl-Titchmarsh functions  $(-m_\alpha(z))$. In the case when the minimal symmetric Schr\"o\-dinger  operator is non-negative, we describe the Schr\"odinger  L-systems that realize  inverse Stieltjes functions $(-m_\alpha(z))$. This approach allows to derive a necessary and sufficient conditions for the functions  $(-m_\alpha(z))$ to be inverse Stieltjes. In particular, the criteria when $(-m_\infty(z))$ is an inverse Stieltjes function is provided. Moreover,  the value  $m_\infty(-0)$ and parameter $\alpha$ allow us to describe the geometric structure of the  realizing  $(-m_\alpha(z))$ L-system.  Additionally, we present the conditions  in terms of the parameter $\alpha$ when the main and associated operators  of a realizing $(-m_\alpha(z))$ L-system have the same or different angle of sectoriality which sets connections with the Kato problem on sectorial extensions of sectorial forms. 

 \end{abstract}

\maketitle

\tableofcontents

\section{Introduction}\label{s1}

The current paper is the third part of the  project (started in \cite{BT18} and continued in \cite{BT19}) that studies the realizations of  the original Weyl-Titchmarsh function $m_\infty(z)$ and its linear-fractional transformation $m_\alpha(z)$ associated with a Schr\"odinger operator.  We  investigate the Herglotz-Nevanlinna functions $-m_\infty(z)$ and $1/m_\infty(z)$ as well as $-m_\alpha(z)$ and $1/m_\alpha(z)$ that  are realized as impedance functions of L-systems containing a dissipative Schr\"o\-dinger main operator $T_h$, ($\IM h>0$). These L-systems  will be refer to  as \textit{Schr\"odinger L-systems} for the rest of the paper.   All formal definitions and expositions of general and Schr\"odinger L-systems are given in Sections \ref{s2} and  \ref{s4}. Note that all Schr\"odinger L-systems $\Theta_{\mu,h}$ form a two-parametric family whose members are uniquely defined by a real-valued parameter $\mu$ and a complex boundary value $h$ ($\IM h>0$) of the main dissipative operator.

In this paper we concentrate on the case when  the realizing Schr\"odinger L-systems are based on non-negative symmetric Schr\"odinger  operator and have accretive main and \textit{accumulative} state-space operator.\footnote{ The situation when the state-space operator  of the realizing Schr\"odinger L-system was accretive was thoroughly considered in \cite{BT19}.} It was shown in \cite{ABT}  (see also \cite{BT-15}) that  the impedance functions of  L-systems with accumulative state-space operators are \textit{inverse Stieltjes} functions. Following our approach from \cite{BT19} here we also set focus on the situation when the realizing accumulative Schr\"odinger L-systems are  sectorial (see Section \ref{s2} for the definition) and the   functions $(-m_\alpha(z))$ are the members of  \textit{sectorial classes} $S^{-1,\beta}$ and $S^{-1,\beta_1,\beta_2}$ of inverse Stieltjes functions that are described in Section \ref{s3}.
Section \ref{s5} is dedicated to  the general realization results from \cite{BT18} for the functions $(-m_\infty(z))$,  $1/m_\infty(z)$, and $(-m_\alpha(z))$. In particular, we recall  there that $(-m_\infty(z))$,  $1/m_\infty(z)$, and $(-m_\alpha(z))$ can be realized as the impedance function of  Schr\"odinger L-systems  $\Theta_{0,i}$, $\Theta_{\infty,i}$, and $\Theta_{\tan\alpha, i}$, respectively.

   Section \ref{s7} contains the main results of the paper when the realization results from Section  \ref{s5} are applied to  Schr\"odinger L-systems with non-negative symmetric Schr\"odinger  operator to obtain important additional properties.  Remark \ref{r-1} of Section   \ref{s7} provides us with the set of criteria for the functions $(-m_\alpha(z))$ to be Stiejtjes or inverse Stijeltjes.  In particular, the Theorem \ref{t-11} and Remark \ref{r-1} give the necessary and sufficient conditions for $(-m_\infty(z))$ to be an inverse Stieltjes function.
   Using the results provided in Section \ref{s4}, we obtain new properties of  L-systems $\Theta_{\tan\alpha, i}$ whose impedance function belong to certain sectorial classes of inverse Stieltjes functions. We emphasize that these results are formulated in terms of the  parameter $\alpha$ defining the function $m_\alpha(z)$.  Also, the knowledge of the limit value  $m_\infty(-0)$ and the value of parameter $\alpha$  lets us  find the exact angles of sectoriality of the main $T_i$ and associate $\ti\bA$ operators of a realizing L-system that establishes the connection to Kato's problem about sectorial extension of sectorial forms.
 
We  conclude the paper with providing an example that illustrates the  main concepts. All the results obtained in this article contribute to a further development of the theory of open physical systems conceived by M.~Liv\u sic in \cite{Lv2}.

\section{Preliminaries}\label{s2}

For a pair of Hilbert spaces $\calH_1$, $\calH_2$ we denote by
$[\calH_1,\calH_2]$ the set of all bounded linear operators from
$\calH_1$ to $\calH_2$. Let $\dA$ be a closed, densely defined,
symmetric operator in a Hilbert space $\calH$ with inner product
$(f,g),f,g\in\calH$. Any non-symmetric operator $T$ in $\cH$ such that
$$\dA\subset T\subset\dA^*$$
is called a \textit{quasi-self-adjoint extension} of $\dA$.

 Consider the rigged Hilbert space (see \cite{Ber}, \cite{ABT})
$\calH_+\subset\calH\subset\calH_- ,$ where $\calH_+ =\dom(\dA^*)$ and
\begin{equation}\label{108}
(f,g)_+ =(f,g)+(\dA^* f, \dA^*g),\;\;f,g \in \dom(A^*).
\end{equation}
Let $\calR$ be the \textit{\textrm{Riesz-Berezansky   operator}} $\calR$ (see \cite{Ber}, \cite{ABT}) which maps $\mathcal H_-$ onto $\mathcal H_+$ such
 that   $(f,g)=(f,\calR g)_+$ ($\forall f\in\calH_+$, $g\in\calH_-$) and
 $\|\calR g\|_+=\| g\|_-$.
 Note that
identifying the space conjugate to $\calH_\pm$ with $\calH_\mp$, we
get that if $\bA\in[\calH_+,\calH_-]$, then
$\bA^*\in[\calH_+,\calH_-].$
An operator $\bA\in[\calH_+,\calH_-]$ is called a \textit{self-adjoint
bi-extension} of a symmetric operator $\dA$ if $\bA=\bA^*$ and $\bA
\supset \dA$.
Let $\bA$ be a self-adjoint
bi-extension of $\dA$ and let the operator $\hat A$ in $\cH$ be defined as follows:
$$
\dom(\hat A)=\{f\in\cH_+:\bA f\in\cH\}, \quad \hat A=\bA\uphar\dom(\hat A).
$$
The operator $\hat A$ is called a \textit{quasi-kernel} of a self-adjoint bi-extension $\bA$ (see \cite{TSh1}, \cite[Section 2.1]{ABT}).
According to the von Neumann Theorem (see \cite[Theorem 1.3.1]{ABT}) the domain of $\wh A$, a self-adjoint extension of $\dA$,  can be expressed as
\begin{equation*}\label{DOMHAT}
\dom(\hat A)=\dom(\dA)\oplus(I+U)\sN_{i},
\end{equation*}
where von Neumann's parameter $U$ is a $(\cdot)$ (and $(+)$)-isometric operator from $\sN_i$ into $\sN_{-i}$  and $$\sN_{\pm i}=\Ker (\dA^*\mp i I)$$ are the deficiency subspaces of $\dA$.

 A self-adjoint bi-extension $\bA$ of a symmetric operator $\dA$ is called \textit{t-self-adjoint} (see \cite[Definition 4.3.1]{ABT}) if its quasi-kernel $\hat A$ is self-adjoint operator in $\calH$.
An operator $\bA\in[\calH_+,\calH_-]$  is called a \textit{quasi-self-adjoint bi-extension} of an operator $T$ if $\bA\supset T\supset \dA$ and $\bA^*\supset T^*\supset\dA.$  

We are mostly interested in the following type of quasi-self-adjoint bi-extensions.
Let $T$ be a quasi-self-adjoint extension of $\dA$ with nonempty resolvent set $\rho(T)$. A quasi-self-adjoint bi-extension $\bA$ of an operator $T$ is called (see \cite[Definition 3.3.5]{ABT}) a \textit{($*$)-extension } of $T$ if $\RE\bA$ is a
t-self-adjoint bi-extension of $\dA$.
In what follows we assume that $\dA$ has deficiency indices $(1,1)$. In this case it is known \cite{ABT} that every  quasi-self-adjoint extension $T$ of $\dA$  admits $(*)$-extensions.
The description of all $(*)$-extensions via Riesz-Berezansky   operator $\calR$ can be found in \cite[Section 4.3]{ABT}.

Recall that a linear operator $T$ in a Hilbert space $\calH$ is called \textbf{accretive} \cite{Ka} if $\RE(Tf,f)\ge 0$ for all $f\in \dom(T)$.  We call an accretive operator $T$
\textbf{$\beta$-sectorial} \cite{Ka} if there exists a value of $\beta\in(0,\pi/2)$ such that
\begin{equation}\label{e8-29}
    (\cot\beta) |\IM(Tf,f)|\le\,\RE(Tf,f),\qquad f\in\dom(T).
\end{equation}
We say that the angle of sectoriality $\beta$ is \textbf{exact} for a $\beta$-sectorial
operator $T$ if $$\tan\beta=\sup_{f\in\dom(T)}\frac{|\IM(Tf,f)|}{\RE(Tf,f)}.$$
An accretive operator is called \textbf{extremal accretive} if it is not $\beta$-sectorial for any $\beta\in(0,\pi/2)$.
 A $(*)$-extension $\bA$ of $T$ is called \textbf{accretive} if $\RE(\bA f,f)\ge 0$ for all $f\in\cH_+$. This is
equivalent to that the real part $\RE\bA=(\bA+\bA^*)/2$ is a nonnegative t-self-adjoint bi-extension of $\dA$.

A ($*$)-extensions $\bA$ of an operator $T$  is called \textbf{accumulative} (see \cite{ABT}) if
\begin{equation}\label{e7-3-3}
(\RE\bA f,f)\le (\dA^\ast f,f)+(f,\dA^\ast f),\quad f\in\calH_+.
\end{equation}

The definition below is a ``lite" version of the definition of L-system given for a scattering L-system with
 one-dimensional input-output space. It is tailored for the case when the symmetric operator of an L-system has deficiency indices $(1,1)$. The general definition of an L-system can be found in \cite[Definition 6.3.4]{ABT} (see also \cite{BHST1} for a non-canonical version).
\begin{definition} 
 An array
\begin{equation}\label{e6-3-2}
\Theta= \begin{pmatrix} \bA&K&\ 1\cr \calH_+ \subset \calH \subset
\calH_-& &\dC\cr \end{pmatrix}
\end{equation}
 is called an \textbf{{L-system}}   if:
\begin{enumerate}
\item[(1)] {$T$ is a dissipative ($\IM(Tf,f)\ge0$, $f\in\dom(T)$) quasi-self-adjoint extension of a symmetric operator $\dA$ with deficiency indices $(1,1)$};
\item[(2)] {$\mathbb  A$ is a   ($\ast $)-extension of  $T$};
\item[(3)] $\IM\bA= KK^*$, where $K\in [\dC,\calH_-]$ and $K^*\in [\calH_+,\dC]$.
\end{enumerate}
\end{definition}
  Operators $T$ and $\bA$ are called a \textit{main and state-space operators respectively} of the system $\Theta$, and $K$ is  a \textit{channel operator}.
It is easy to see that the operator $\bA$ of the system  \eqref{e6-3-2}  is such that $\IM\bA=(\cdot,\chi)\chi$, $\chi\in\calH_-$ and pick $K c=c\cdot\chi$, $c\in\dC$ (see \cite{ABT}).
  A system $\Theta$ in \eqref{e6-3-2} is called \textit{minimal} if the operator $\dA$ is a prime operator in $\calH$, i.e., there exists no non-trivial reducing invariant subspace of $\calH$ on which it induces a self-adjoint operator. Minimal L-systems of the form \eqref{e6-3-2} with  one-dimensional input-output space were also considered in \cite{BMkT}.

We  associate with an L-system $\Theta$ the  function
\begin{equation}\label{e6-3-3}
W_\Theta (z)=I-2iK^\ast (\mathbb  A-zI)^{-1}K,\quad z\in \rho (T),
\end{equation}
which is called the \textbf{transfer  function} of the L-system $\Theta$. We also consider the  function
\begin{equation}\label{e6-3-5}
V_\Theta (z) = K^\ast (\RE\bA - zI)^{-1} K,
\end{equation}
that is called the
\textbf{impedance function} of an L-system $ \Theta $ of the form (\ref{e6-3-2}).  The transfer function $W_\Theta (z)$ of the L-system $\Theta $ and function $V_\Theta (z)$ of the form (\ref{e6-3-5}) are connected by the following relations valid for $\IM z\ne0$, $z\in\rho(T)$,
\begin{equation*}\label{e6-3-6}
\begin{aligned}
V_\Theta (z) &= i [W_\Theta (z) + I]^{-1} [W_\Theta (z) - I],\\
W_\Theta(z)&=(I+iV_\Theta(z))^{-1}(I-iV_\Theta(z)).
\end{aligned}
\end{equation*}
We say that an L-system $\Theta $ of the form \eqref{e6-3-2} is called an \textbf{accretive L-system} (\cite{BT10}, \cite{DoTs}) if its state-space operator  operator $\bA$ is accretive, that is $\RE(\bA f,f)\ge 0$ for all $f\in \calH_+$,
 and \textbf{accumulative} (\cite{BT11}) if its state-space operator $\bA$ is accumulative, i.e., satisfies \eqref{e7-3-3}. It is easy to see that if an L-system is accumulative, then \eqref{e7-3-3} implies that the operator $\dA$ of the system is non-negative and both operators $T$ and $T^*$ are accretive. We also associate another operator $\ti\bA$ to an accumulative L-system $\Theta$. It is given by
\begin{equation}\label{e-14}
    \ti\bA=2\,\RE\dA^*-\bA,
\end{equation}
where $\dA^*$ is in $[\calH_+,\calH_-]$. Obviously, $\RE\dA^*\in[\calH_+,\calH_-]$ and $\ti\bA\in[\calH_+,\calH_-]$. Clearly, $\ti\bA$ is a bi-extension of $\dA$ and is accretive if and only if $\bA$ is accumulative. It is also not hard to see that even though $\ti\bA$ is not a ($*$)-extensions  of the operator $T$ but the form $(\ti\bA f,f)$, $f\in\calH_+$ extends the form $(f,T f)$, $f\in\dom(T)$.
 An accretive  L-system  is called  \textbf{sectorial} if the operator  $\bA$ is sectorial, i.e., satisfies \eqref{e8-29} for some $\beta\in(0,\pi/2)$ and all $f\in\cH_+$. Similarly, an accumulative L-system  is   \textbf{sectorial} if its  operator $\ti\bA$ of the form \eqref{e-14} is sectorial.

\section{Sectorial classes  of inverse Stieltjes functions}\label{s3}

It is known that a scalar function $V(z)$ is called the Herglotz-Nevanlinna function if it is holomorphic on ${\dC \setminus \dR}$, symmetric with respect to the real axis, i.e., $V(z)^*=V(\bar{z})$, $z\in {\dC \setminus \dR}$, and if it satisfies the positivity condition $\IM V(z)\geq 0$,  $z\in \dC_+$.
A complete description of the class of all Herglotz-Nevanlinna functions, that can be realized as impedance functions of L-systems can be found in \cite{ABT}, \cite{BMkT},  \cite{DMTs},  \cite{GT}.
A scalar Herglotz-Nevanlinna function $V(z)$ is a \textit{Stieltjes function} (see \cite{KK74}) if it is holomorphic in $\Ext[0,+\infty)$ and
\begin{equation}\label{e4-0}
\frac{\IM[zV(z)]}{\IM z}\ge0.
\end{equation}
Now we turn to the definition of inverse Stieltjes functions (see \cite{KK74}, \cite{ABT}). 
A scalar Herglotz-Nevanlinna function $V(z)$ is called \textbf{inverse Stieltjes} if $V(z)$ it is holomorphic in $\Ext[0,+\infty)$ and
\begin{equation}\label{e9-187}
\frac{\Im[V(z)/z]}{\IM\, z}\ge0.
\end{equation}
We will consider  the inverse Stieltjes function $V(z)$ that  admit  (see \cite{KK74}) the following integral representation
\begin{equation}\label{e4-1-8}
V(z) =\gamma+\int_{0}^\infty\left( \frac 1{t-z}- \frac{1}{t}\right)\,dG(t),
\end{equation}
where $\gamma\le0$ 
and $G(t)$ is a non-decreasing on $[0,+\infty)$  function such that
$\int^\infty_{0}\frac{dG(t)}{t+t^2}<\infty.$
The following definition  provides the description of a realizable subclass of inverse Stieltjes  functions.
A scalar inverse Stieltjes function $V(z)$ is  a member of the \textbf{class ${S^{-1}_0}(R)$} if  the measure $G(t)$ in  representation \eqref{e4-1-8} is unbounded.
It was shown in \cite[Section 9.9]{ABT} that a function $V(z)$ belongs to the class $S_0^{-1}(R)$ if and only if it can be realized
as impedance function of an accumulative  L-system $\Theta$ of the form \eqref{e6-3-2} with  a non-negative densely defined symmetric operator $\dA$.

The definition of \textbf{sectorial subclasses $S^{-1,\beta}$ } of scalar inverse Stieltjes functions is the  following.
 An inverse Stieltjes function $V(z)$ belongs to $S^{-1,\beta}$ if
\begin{equation}\label{e9-180-i}
K_{\beta}= \sum_{k,l=1}^n\left[\frac{ V(z_k)/z_k-V(\bar z_l)/\bar z_l}{z_k-\bar z_l}-{{(\cot\beta)}~} \frac{V(\bar z_l)}{\bar z_l}\frac{V(z_k)}{z_k}\right]h_k\bar h_l\ge0,
\end{equation}
for an arbitrary sequences of complex numbers $\{z_k\}$, ($\IM z_k>0$) and $\{h_k\}$, ($k=1,...,n$). For
$0<\beta_1< \beta_2 <\frac{\pi}{2}$, we have
\begin{equation*}
S^{-1,\beta_1}\subset S^{-1, \beta_2}\subset{S^{-1}},
\end{equation*}
where $S^{-1}$ denotes the class of all inverse Stieltjes functions (which corresponds to the case $\beta=\frac{\pi}{2}$).

Let $\Theta$ be an accumulative minimal L-system of the form \eqref{e6-3-2}. It was shown in  \cite{BT14} that  the impedance function $V_\Theta(z)$ defined by \eqref{e6-3-5} belongs to the class $S^{-1,\beta}$ if and only if the operator $\ti\bA$ of the form \eqref{e-14} associated to the L-system $\Theta$ is $\beta$-sectorial.

Let $0\le \beta_1<\frac{\pi}{2}$, $0<\beta_2\le \frac{\pi}{2}$, and $\beta_1\le\beta_2$.
We say that a scalar inverse Stieltjes function $V(z)$ of the class $S_0^{-1}(R)$ belongs to the \textbf{class $S^{-1,\beta_1,\beta_2}$} if
\begin{equation}\label{e9-156-i}
    \tan(\pi-\beta_1)=\lim_{x\to0}V(x),\qquad \tan(\pi-\beta_2)=\lim_{x\to-\infty}V(x).
\end{equation}
The following connection between the classes $S^{-1,\beta}$ and $S^{-1,\beta_1,\beta_2}$ was established in \cite{BT14}.
Let $\Theta$ be an accumulative L-system of the form \eqref{e6-3-2} with a densely defined non-negative symmetric operator $\dA$. Let also $\ti\bA$ of the form \eqref{e-14} be  $\beta$-sectorial. Then the impedance function $V_\Theta(z)$ defined by \eqref{e6-3-5} belongs to the class $S^{-1,\beta_1,\beta_2}$. Moreover, the operator $T$ of $\Theta$ is $(\beta_2-\beta_1)$-sectorial 
    with the exact  angle of sectoriality $(\beta_2-\beta_1)$, and $\tan\beta_2\le\tan\beta$.
Note, that this  also remains valid for the case when the operator $\ti \bA$ is accretive but not $\beta$-sectorial for any $\beta\in(0,\pi/2)$.
It also follows that under the same set of assumptions, if   $\beta$ is the exact angle of sectoriality of the operator $T$, then  $V_\Theta(z)\in S^{-1,0,\beta}$ and is such that $\gamma=0$  in \eqref{e4-1-8}.

 Let $\Theta$ be a minimal accumulative L-system of the form \eqref{e6-3-2} as above.  Let also $\ti\bA$ be defined via \eqref{e-14}. It was shown in \cite{BT14} that if  the impedance function $V_\Theta(z)$ belongs to the class  $S^{-1,\beta_1,\beta_2}$  and $\beta_2\ne\pi/2$, then $\ti\bA$ is $\beta$-sectorial, where $\tan\beta$ is defined via
 \begin{equation}\label{e9-176}
    \tan\beta={\tan\beta_2+2\sqrt{\tan\beta_1(\tan\beta_2-\tan\beta_1)}}.
\end{equation}
 Moreover, both $\ti\bA$ and $T$ are $\beta$-sectorial  operators  with  the exact angle $\beta\in(0,\pi/2)$ if and only if  $V_\Theta(z) \in S^{-1,0,\beta}$ and
  \begin{equation}\label{e9-178-new-i}
\tan\beta=\int_0^\infty\frac{dG(t)}{t},
 \end{equation}
 where $G(t)$ is the measure from integral representation \eqref{e4-1-8} of $V_\Theta(z)$  (see  \cite[Theorem 13]{BT14}).



\section{Construction of a Schr\"odinger L-system}\label{s4}

Consider $\calH=L_2[\ell,+\infty)$, $\ell\ge0$, and $l(y)=-y^{\prime\prime}+q(x)y$, where $q$ is a real locally summable on  $[\ell,+\infty)$ function. Suppose that the symmetric operator
\begin{equation}
\label{128}
 \left\{ \begin{array}{l}
 \dA y=-y^{\prime\prime}+q(x)y \\
 y(\ell)=y^{\prime}(\ell)=0 \\
 \end{array} \right.
\end{equation}
has deficiency indices (1,1). Let $D^*$ be the set of functions locally absolutely continuous together with their first derivatives such that $l(y) \in L_2[\ell,+\infty)$. Consider $\calH_+=\dom(\dA^*)=D^*$ with the scalar product
$$(y,z)_+=\int_{\ell}^{\infty}\left(y(x)\overline{z(x)}+l(y)\overline{l(z)}
\right)dx,\;\; y,\;z \in D^*.$$ Let $\calH_+ \subset L_2[\ell,+\infty) \subset \calH_-$ be the corresponding triplet of Hilbert spaces and the operators $T_h$ and $T_h^*$ are
\begin{equation}\label{131}
 \left\{ \begin{array}{l}
 T_hy=l(y)=-y^{\prime\prime}+q(x)y \\
 hy(\ell)-y^{\prime}(\ell)=0 \\
 \end{array} \right.
           ,\quad  \left\{ \begin{array}{l}
 T^*_hy=l(y)=-y^{\prime\prime}+q(x)y \\
 \overline{h}y(\ell)-y^{\prime}(\ell)=0 \\
 \end{array} \right.,
\end{equation}
where $\IM h>0$.
Suppose  $\dA$ is a symmetric operator  of the form \eqref{128} with deficiency indices (1,1), generated by the differential operation $l(y)=-y^{\prime\prime}+q(x)y$. Let also $\varphi_k(x,\lambda) (k=1,2)$ be the solutions of the following Cauchy problems:
$$\left\{ \begin{array}{l}
 l(\varphi_1)=\lambda \varphi_1 \\
 \varphi_1(\ell,\lambda)=0 \\
 \varphi'_1(\ell,\lambda)=1 \\
 \end{array} \right., \qquad
\left\{ \begin{array}{l}
 l(\varphi_2)=\lambda \varphi_2 \\
 \varphi_2(\ell,\lambda)=-1 \\
 \varphi'_2(\ell,\lambda)=0 \\
 \end{array} \right.. $$
It is well known \cite{Na68}, \cite{Levitan} that there exists a function $m_\infty(\lambda)$  introduced by H.~Weyl \cite{W} for which
$$\varphi(x,\lambda)=\varphi_2(x,\lambda)+m_\infty(\lambda)
\varphi_1(x,\lambda)$$ belongs to $L_2[\ell,+\infty)$. It is important for our discussion that the function $m_\infty(\lambda)$ is not a Herglotz-Nevanlinna function but $(-m_\infty(\lambda))$ and $(1/m_\infty(\lambda))$ are (see \cite{Levitan}, \cite{Na68}).

A  construction of an L-system associated with a non-self-adjoint Schr\"odin\-ger operator $T_h$ was thoroughly described in  \cite{ABT}. In particular, it  was shown (see also  \cite{ArTs0})  that  the set of all ($*$)-extensions of the non-self-adjoint Schr\"odinger operator $T_h$ of the form \eqref{131} in $L_2(\ell,+\infty)$ is given by
\begin{equation}\label{137}
\begin{split}
&\bA_{\mu, h}\, y=-y^{\prime\prime}+q(x)y-\frac {1}{\mu-h}\,[y^{\prime}(\ell)-
hy(\ell)]\,[\mu \delta (x-\ell)+\delta^{\prime}(x-\ell)], \\
&\bA^*_{\mu, h}\, y=-y^{\prime\prime}+q(x)y-\frac {1}{\mu-\overline h}\,
[y^{\prime}(\ell)-\overline hy(\ell)]\,[\mu \delta
(x-\ell)+\delta^{\prime}(x-\ell)].
\end{split}
\end{equation}
Note that the formulas \eqref{137} establish a one-to-one correspondence between the set of all ($*$)-extensions of a Schr\"odinger operator $T_h$ of the form \eqref{131} and all real numbers $\mu \in [-\infty,+\infty]$. It is easy to  check that the ($*$)-extension $\bA$ in \eqref{137}  satisfies the condition
\begin{equation*}\label{145}
\IM\bA_{\mu, h}=\frac{\bA_{\mu, h} - \bA^*_{\mu, h}}{2i}=(.,g_{\mu, h})g_{\mu, h},
\end{equation*}
where
\begin{equation}\label{146}
g_{\mu, h}=\frac{(\IM h)^{\frac{1}{2}}}{|\mu - h|}\,[\mu\delta(x-\ell)+\delta^{\prime}(x-\ell)]
\end{equation}
and $\delta(x-\ell), \delta^{\prime}(x-\ell)$ are the delta-function and
its derivative at the point $\ell$, respectively. Furthermore,
\begin{equation*}\label{147}
(y,g_{\mu, h})=\frac{(\IM h)^{\frac{1}{2}}}{|\mu - h|}\ [\mu y(\ell)-y^{\prime}(\ell)],
\end{equation*}
where $y\in \calH_+$, $g_{\mu, h}\in \calH_-$, and $\calH_+ \subset L_2[\ell,+\infty) \subset \calH_-$ is the triplet of Hilbert spaces discussed above.

It was also shown in \cite{ABT} that the quasi-kernel $\hat A_\xi$ of $\RE\bA_{\mu, h}$ is given by
\begin{equation}\label{e-31}
  \left\{ \begin{array}{l}
 \hat A_\xi y=-y^{\prime\prime}+q(x)y \\
 y^{\prime}(\ell)=\xi y(\ell) \\
 \end{array} \right.,\quad \textrm{where} \quad  \xi=\frac{\mu\RE h-|h|^2}{\mu-\RE h}.
\end{equation}
Take operator $K_{\mu, h}{c}=cg_{\mu, h}, \;(c\in \dC)$. Clearly,
\begin{equation}\label{148}
K^*_{\mu, h} y=(y,g_{\mu, h}),\quad  y\in \calH_+,
\end{equation}
and $\IM\bA_{\mu, h}=K_{\mu, h}K^*_{\mu, h}.$ Therefore, 
\begin{equation}\label{149}
\Theta_{\mu, h}= \begin{pmatrix} \bA_{\mu, h}&K_{\mu, h}&1\cr \calH_+ \subset
L_2[\ell,+\infty) \subset \calH_-& &\dC\cr \end{pmatrix},
\end{equation}
is an L-system  with the main operator $T_h$, ($\IM h>0$) of the form \eqref{131},  the state-space operator $\bA_{\mu, h}$ of the form \eqref{137}, and  with the channel operator $K_{\mu, h}$ of the form \eqref{148}. In what follows we will refer to $\Theta_{\mu, h}$ as a \textit{Schr\"odinger L-system}. 
It was established in \cite{ArTs0}, \cite{ABT} that the transfer and impedance functions of $\Theta_{\mu, h}$ are
\begin{equation}\label{150}
W_{\Theta_{\mu, h}}(z)= \frac{\mu -h}{\mu - \overline h}\,\,
\frac{m_\infty(z)+ \overline h}{m_\infty(z)+h},
\end{equation}
and
\begin{equation}\label{1501}
V_{\Theta_{\mu, h}}(z)=\frac{\left(m_\infty(z)+\mu\right)\IM h}{\left(\mu-\RE h\right)m_\infty(z)+\mu\RE h-|h|^2}.
\end{equation}

\section{Schr\"odinger L-system realizations of $-m_\infty(z)$, $1/m_\infty(z)$ and $m_\alpha(z)$}\label{s5}
 
 As we have already mentioned in Section \ref{s4},  the original  Weyl-Titchmarsh function $m_\infty(z)$ has a property that $(-m_\infty(z))$ is a Herglotz-Nevanlinna function (see  \cite{Levitan}, \cite{Na68}). A problem whether $(-m_\infty(z))$ can be realized as the impedance function of a Schr\"odinger L-system was solved  in the following theorem  proved in \cite{BT18}.

\begin{theorem}[\cite{BT18}]\label{t-6}%
Let $\dA$ be a  symmetric Schr\"odinger operator of the form \eqref{128} with deficiency indices $(1, 1)$ and locally summable potential in $\calH=L^2[\ell, \infty).$ If $m_\infty(z)$ is the  Weyl-Titchmarsh function of $\dA$, then the Herglotz-Nevanlinna function $(-m_\infty(z))$ can be realized as the impedance function of  a Schr\"odinger L-system $\Theta_{\mu, h}$ of the form \eqref{149} with $\mu=0$ and $h=i$.

Conversely, let $\Theta_{\mu, h}$ be  a Schr\"odinger L-system of the form \eqref{149} with the symmetric operator $\dA$ such that $V_{\Theta_{\mu, h}}(z)=-m_\infty(z),$ for all $z\in\dC_\pm$ and $\mu\in\mathbb R\cup\{\infty\}$. Then the parameters $\mu$ and $h$ defining $\Theta_{\mu, h}$ are such that $\mu=0$ and $h=i$.
\end{theorem}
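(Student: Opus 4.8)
The plan is to read off both directions from the explicit formula \eqref{1501} for the impedance function $V_{\Theta_{\mu, h}}(z)$. For the direct statement, I would substitute $\mu=0$ and $h=i$ into \eqref{1501}: then $\IM h=1$, $\RE h=0$, $|h|^2=1$, so the numerator becomes $m_\infty(z)+0=m_\infty(z)$ and the denominator becomes $(0-0)m_\infty(z)+0-1=-1$, giving $V_{\Theta_{0,i}}(z)=-m_\infty(z)$. One must also check that the array $\Theta_{0,i}$ is indeed a legitimate L-system in the sense of the excerpt — but this is precisely what was recorded in the construction of Section \ref{s4}: $\Theta_{\mu,h}$ of the form \eqref{149} is an L-system for every real $\mu$ (including $\mu=\infty$) and every $h$ with $\IM h>0$, and $T_i$ is dissipative since $\IM i>0$. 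So the direct part requires essentially no work beyond the substitution.

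For the converse, suppose $\Theta_{\mu,h}$ is a Schr\"odinger L-system of the form \eqref{149} (so automatically $\IM h>0$) with the \emph{same} underlying symmetric operator $\dA$, and assume $V_{\Theta_{\mu,h}}(z)=-m_\infty(z)$ for all $z\in\dC_\pm$. Using \eqref{1501} this is the identity
\begin{equation*}
\frac{\left(m_\infty(z)+\mu\right)\IM h}{\left(\mu-\RE h\right)m_\infty(z)+\mu\RE h-|h|^2}=-m_\infty(z),\qquad z\in\dC_\pm.
\end{equation*}
Clearing denominators gives a polynomial identity in the single variable $w:=m_\infty(z)$:
\begin{equation*}
(\IM h)\,(w+\mu)=-w\bigl[(\mu-\RE h)w+\mu\RE h-|h|^2\bigr].
\end{equation*}
Since $m_\infty(z)$ is a non-constant analytic function, $w$ ranges over an open subset of $\dC$, so this must hold identically as polynomials in $w$. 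Comparing the coefficient of $w^2$ forces $\mu-\RE h=0$, i.e. $\mu=\RE h$; comparing the coefficient of $w^1$ gives $\IM h=-(\mu\RE h-|h|^2)=|h|^2-\mu\RE h=|h|^2-(\RE h)^2=(\IM h)^2$, hence $\IM h=1$ (as $\IM h>0$); and the constant term gives $(\IM h)\mu=0$, so $\mu=0$. Then $\RE h=\mu=0$, so $h=i$, as claimed. (The $\mu=\infty$ case is excluded immediately since the left side of the cleared identity would degenerate — one checks directly from \eqref{1501} that $V_{\Theta_{\infty,h}}(z)=-\IM h/\RE h$ is constant, hence cannot equal the non-constant $-m_\infty(z)$.)

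The only real subtlety — and the step I would be most careful about — is justifying the passage from "equality of two analytic functions of $z$" to "equality of two polynomials in $w=m_\infty(z)$". This rests on the fact that $m_\infty(z)$ is non-constant and open as a map, so its range contains an open set and any polynomial vanishing on that set vanishes identically; equivalently, a nonzero rational function of $w$ cannot vanish on a set with a limit point. One should also note in passing that the denominator in \eqref{1501} does not vanish identically (it cannot, since $\Theta_{\mu,h}$ is a genuine L-system with $V_{\Theta_{\mu,h}}$ well-defined), so clearing denominators is legitimate. Everything else is the routine three-coefficient comparison carried out above. I do not expect the positivity/sectoriality machinery of Sections \ref{s2}--\ref{s3} to be needed here at all — this converse is a pure uniqueness statement extracted from the explicit transfer/impedance formulas of Section \ref{s4}.
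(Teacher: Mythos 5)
Your main argument is correct and is the natural computation one expects here; note that the paper itself gives no proof of this statement (it is quoted from \cite{BT18}), so the only meaningful comparison is against the evident computation with \eqref{1501}, which you carry out properly. The substitution $\mu=0$, $h=i$ for the direct part and the three-coefficient comparison for the converse are both right, and the passage to a polynomial identity in $w=m_\infty(z)$ is legitimate: $-m_\infty$ is a non-constant Herglotz--Nevanlinna function, so $w$ takes infinitely many values, and three distinct values already force a quadratic to vanish identically. The one concrete slip is in your parenthetical treatment of $\mu=\infty$: dividing the numerator and denominator of \eqref{1501} by $\mu$ gives
\begin{equation*}
V_{\Theta_{\infty,h}}(z)=\frac{\IM h}{m_\infty(z)+\RE h},
\end{equation*}
which is not the constant $-\IM h/\RE h$ you state (and indeed cannot be constant, since by Theorem \ref{t-7} the system $\Theta_{\infty,i}$ realizes $1/m_\infty(z)$). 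The case is nonetheless still excluded by the same mechanism as in your main computation: $V_{\Theta_{\infty,h}}(z)=-m_\infty(z)$ would give $\IM h=-m_\infty(z)\bigl(m_\infty(z)+\RE h\bigr)$, a quadratic identity in $w$ with leading coefficient $-1\ne0$, which is impossible. With that correction the proof is complete.
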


An analogues result for the function $1/m_\infty(z)$ also takes place (see \cite{BT18}).
\begin{theorem}[\cite{BT18}]\label{t-7}%
Let $\dA$ be a  symmetric Schr\"odinger operator of the form \eqref{128} with deficiency indices $(1, 1)$ and locally summable potential in $\calH=L^2[\ell, \infty).$ If $m_\infty(z)$ is the  Weyl-Titchmarsh function of $\dA$, then the Herglotz-Nevanlinna function $(1/m_\infty(z))$ can be realized as the impedance function of  a Schr\"odinger L-system $\Theta_{\mu, h}$ of the form \eqref{149} with $\mu=\infty$ and $h=i$.

Conversely, let $\Theta_{\mu, h}$ be  a Schr\"odinger L-system of the form \eqref{149} with the symmetric operator $\dA$ such that $V_{\Theta_{\mu, h}}(z)=\frac{1}{m_\infty(z)},$ for all $z\in\dC_\pm$ and $\mu\in\mathbb R\cup\{\infty\}$. Then the parameters $\mu$ and $h$ defining $\Theta_{\mu, h}$ are such that $\mu=\infty$ and $h=i$.
\end{theorem}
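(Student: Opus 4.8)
The plan is to use the explicit formula \eqref{1501} for the impedance function of a Schr\"odinger L-system together with the elementary fact that $1/m_\infty(z)$ is a Herglotz--Nevanlinna function on $\dC_\pm$. First I would establish the direct statement: set $\mu=\infty$ and $h=i$ and compute the limit of \eqref{1501} as $\mu\to\infty$. Writing $V_{\Theta_{\mu,h}}(z)=\dfrac{(m_\infty(z)+\mu)\IM h}{(\mu-\RE h)m_\infty(z)+\mu\RE h-|h|^2}$ and dividing numerator and denominator by $\mu$, one gets in the limit $\mu\to\infty$ the expression $\dfrac{\IM h}{m_\infty(z)+\RE h}$; substituting $h=i$ (so $\IM h=1$, $\RE h=0$) yields $1/m_\infty(z)$. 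One must also check that for $\mu=\infty$ the array \eqref{149} is genuinely an L-system --- i.e.\ that the $(*)$-extension $\bA_{\infty,i}$ from \eqref{137} is well defined (the $\mu=\infty$ case corresponds, after renormalization, to the coefficient of $\delta(x-\ell)$ dominating) and that $K_{\infty,i}$, $g_{\infty,i}$ from \eqref{146}, \eqref{148} make sense; this is already contained in the correspondence between $(*)$-extensions and $\mu\in[-\infty,+\infty]$ recalled after \eqref{137}, so it can be cited.

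Next I would prove the converse. Suppose $\Theta_{\mu,h}$ is a Schr\"odinger L-system with $V_{\Theta_{\mu,h}}(z)=1/m_\infty(z)$ for all $z\in\dC_\pm$. Using \eqref{1501}, this is the identity of meromorphic functions
\begin{equation*}
\frac{(m_\infty(z)+\mu)\IM h}{(\mu-\RE h)m_\infty(z)+\mu\RE h-|h|^2}=\frac{1}{m_\infty(z)},
\end{equation*}
valid on a set with an accumulation point, hence everywhere. Cross-multiplying gives
\begin{equation*}
(\IM h)\,m_\infty(z)^2+\mu(\IM h)\,m_\infty(z)=(\mu-\RE h)\,m_\infty(z)+\mu\RE h-|h|^2.
\end{equation*}
Since $m_\infty(z)$ is a non-constant holomorphic function (it is not Herglotz--Nevanlinna while $-m_\infty$ is, so it genuinely varies), it is a transcendental-over-$\dC$ object in the sense that a nontrivial polynomial identity in $m_\infty(z)$ with constant coefficients forces all coefficients to vanish; equivalently, one may treat $m_\infty(z)$ as an independent variable $w$ and match coefficients of $w^2$, $w^1$, $w^0$. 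The $w^2$ coefficient gives $\IM h=0$, which contradicts $\IM h>0$ --- so in fact the finite-$\mu$ case is impossible and one is forced into the $\mu=\infty$ regime.

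To handle $\mu=\infty$ rigorously in the converse, I would instead start from the limiting form of the impedance function, $V_{\Theta_{\infty,h}}(z)=\dfrac{\IM h}{m_\infty(z)+\RE h}$, obtained as above (or quote it from \cite{ABT}, \cite{ArTs0}), and set it equal to $1/m_\infty(z)$: this forces $(\IM h)\,m_\infty(z)=m_\infty(z)+\RE h$, and matching the coefficient of $m_\infty(z)$ and the constant term (again using non-constancy of $m_\infty$) gives $\IM h=1$ and $\RE h=0$, i.e.\ $h=i$. The only real subtlety, and the step I expect to require the most care, is the bookkeeping around $\mu=\infty$: one must argue cleanly that a realizing L-system with $V_{\Theta_{\mu,h}}=1/m_\infty$ cannot have finite $\mu$ (the $m_\infty^2$ term), and then that among the $\mu=\infty$ systems exactly $h=i$ works. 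Everything else is the routine algebraic manipulation of \eqref{1501} plus the identity-theorem argument that a polynomial relation in $m_\infty(z)$ with constant coefficients must be trivial; this parallels exactly the proof of \thmref{t-6} and can be presented in the same style.
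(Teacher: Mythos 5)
Your proposal is correct and follows exactly the route one would expect from the paper's setup: the direct part is the evaluation of \eqref{1501} at $\mu=\infty$, $h=i$, and the converse is the identity-theorem/coefficient-matching argument that excludes finite $\mu$ (the $m_\infty^2$ term forces $\IM h=0$) and then pins down $h=i$ in the $\mu=\infty$ regime. The paper itself only quotes this theorem from \cite{BT18} without reproducing a proof, but your argument is the standard one and parallels the proof of Theorem~\ref{t-6}; the only point worth tightening is the justification that $m_\infty(z)$ is non-constant, which follows from the symmetry $m_\infty(\bar z)=\overline{m_\infty(z)}$ together with the fact that $m_\infty$ is not Herglotz--Nevanlinna (a real constant would be).
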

One can note that  both L-systems $\Theta_{0,i}$  and $\Theta_{\infty,i}$ obtained in Theorems \ref{t-6} and \ref{t-7} share the same main operator
\begin{equation}\label{e-56-T}
    \left\{ \begin{array}{l}
 T_{i}\, y=-y^{\prime\prime}+q(x)y \\
 y'(\ell)=i\,y(\ell) \\
 \end{array} \right..
\end{equation}

The Weyl-Titchmarsh functions $m_\alpha(z)$ are defined as follows.  Let  $\dA$ be a symmetric operator  of the form \eqref{128} with deficiency indices (1,1), generated by the differential operation $l(y)=-y^{\prime\prime}+q(x)y$. Let also $\varphi_\alpha(x,{z})$ and $\theta_\alpha(x,{z})$ be the solutions of the following Cauchy problems:
$$\left\{ \begin{array}{l}
 l(\varphi_\alpha)={z} \varphi_\alpha \\
 \varphi_\alpha(\ell,{z})=\sin\alpha \\
 \varphi'_\alpha(\ell,{z})=-\cos\alpha \\
 \end{array} \right., \qquad
\left\{ \begin{array}{l}
 l(\theta_\alpha)={z} \theta_\alpha \\
 \theta_\alpha(\ell,{z})=\cos\alpha \\
 \theta'_\alpha(\ell,{z})=\sin\alpha \\
 \end{array} \right.. $$
One can show  \cite{DanLev90}, \cite{Na68}, \cite{Ti62} that there exists an analytic in $\dC_\pm$  function $m_\alpha({z})$  for which
\begin{equation}\label{e-62-psi}
\psi(x,{z})=\theta_\alpha(x,{z})+m_\alpha({z})\varphi_\alpha(x,{z})
\end{equation}
belongs to $L_2[\ell,+\infty)$. It is easy to see that if $\alpha=\pi$, then $m_\pi({z})=m_\infty({z})$. The functions $ m_\alpha({z})$ and $m_\infty(z)$ are connected (see \cite{DanLev90}, \cite{Ti62}) by
\begin{equation}\label{e-59-LFT}
    m_\alpha({z})=\frac{\sin\alpha+m_\infty({z})\cos\alpha}{\cos\alpha-m_\infty({z})\sin\alpha}.
\end{equation}
It is known \cite{Na68}, \cite{Ti62} that for any real $\alpha$ the function $-m_\alpha({z})$ is a Herglotz-Nevanlinna function. Also,  \eqref{e-59-LFT} yields
\begin{equation}\label{e-61-Don}
    -m_\alpha(z)=\frac{\sin\alpha+m_\infty(z)\cos\alpha}{-\cos\alpha+m_\infty(z)\sin\alpha}
    =\frac{\cos\alpha+\frac{1}{m_\infty(z)}\sin\alpha}{\sin\alpha-\frac{1}{m_\infty(z)}\cos\alpha}.
\end{equation}
The theorem below was proved in  \cite{BT18} for Herglotz-Nevanlinna functions $-m_\alpha(z)$  and  is similar to Theorem \ref{t-6}.
\begin{theorem}[\cite{BT18}]\label{t-8}%
Let $\dA$ be a  symmetric Schr\"odinger operator of the form \eqref{128} with deficiency indices $(1, 1)$ and locally summable potential in $\calH=L^2[\ell, \infty).$ If $m_\alpha(z)$ is the  function of $\dA$ described in \eqref{e-62-psi}, then the Herglotz-Nevanlinna function $(-m_\alpha(z))$ can be realized as the impedance function of  a Schr\"odinger L-system $\Theta_{\mu, h}$ of the form \eqref{149} with
\begin{equation}\label{e-62-h-mu}
    \mu=\tan\alpha\quad \textrm{and}\quad h=i.
\end{equation}

Conversely, let $\Theta_{\mu, h}$ be  a Schr\"odinger L-system of the form \eqref{149} with the symmetric operator $\dA$ such that $$V_{\Theta_{\mu, h}}(z)=-m_\alpha(z),$$ for all $z\in\dC_\pm$ and $\mu\in\mathbb R\cup\{\infty\}$. Then the parameters $\mu$ and $h$ defining $\Theta_{\mu, h}$ are given by \eqref{e-62-h-mu}, i.e., $\mu=\tan\alpha$ and $h=i$.
\end{theorem}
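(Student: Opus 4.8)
The plan is to reduce everything to the already-established transfer/impedance formulas \eqref{150}--\eqref{1501} for the Schr\"odinger L-system $\Theta_{\mu,h}$ together with the linear-fractional relation \eqref{e-59-LFT} between $m_\alpha(z)$ and $m_\infty(z)$. For the direct statement, I would start from \eqref{1501} with $h=i$, so that $\RE h=0$, $\IM h=1$, $|h|^2=1$, and obtain
\begin{equation*}
V_{\Theta_{\mu,i}}(z)=\frac{m_\infty(z)+\mu}{\mu\, m_\infty(z)-1}.
\end{equation*}
Then I would substitute $\mu=\tan\alpha$ and compare with the expression for $-m_\alpha(z)$ from \eqref{e-61-Don}; dividing numerator and denominator by $\cos\alpha$ (assuming $\cos\alpha\neq0$, the case $\cos\alpha=0$ being handled separately by passing to the limit $\mu\to\infty$ and using Theorem~\ref{t-7}) turns $V_{\Theta_{\tan\alpha,i}}(z)$ exactly into $\dfrac{m_\infty(z)+\tan\alpha}{\tan\alpha\, m_\infty(z)-1}=\dfrac{\sin\alpha+m_\infty(z)\cos\alpha}{-\cos\alpha+m_\infty(z)\sin\alpha}=-m_\alpha(z)$. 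This is a short direct computation; the only subtlety is bookkeeping the degenerate angle $\alpha=\pi/2$, which I would dispatch by continuity of both sides in $\mu$ (equivalently, invoke Theorem~\ref{t-7} since $m_{\pi/2}(z)=-1/m_\infty(z)$).

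For the converse, suppose $\Theta_{\mu,h}$ is a Schr\"odinger L-system with $V_{\Theta_{\mu,h}}(z)=-m_\alpha(z)$ for all $z\in\dC_\pm$. The first step is to recover $h=i$. I would argue that the symmetric operator $\dA$ underlying any Schr\"odinger L-system of the form \eqref{149} is fixed by the potential $q$ and the endpoint conditions \eqref{128}, hence so is $m_\infty(z)$; then \eqref{1501} exhibits $V_{\Theta_{\mu,h}}(z)$ as an explicit linear-fractional function of $m_\infty(z)$ with coefficients depending only on $(\mu,h)$. Since $-m_\alpha(z)$ is, by \eqref{e-59-LFT}, a (nondegenerate) linear-fractional function of $m_\infty(z)$ with coefficients depending only on $\alpha$, and since $m_\infty$ is a nonconstant holomorphic function (so two linear-fractional transformations agreeing on the range of $m_\infty$ must coincide as M\"obius maps), I can match coefficients. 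Writing $V_{\Theta_{\mu,h}}(z)=\dfrac{(\IM h)m_\infty+\mu\IM h}{(\mu-\RE h)m_\infty+\mu\RE h-|h|^2}$ and $-m_\alpha(z)=\dfrac{\cos\alpha\, m_\infty+\sin\alpha}{\sin\alpha\, m_\infty-\cos\alpha}$, the equality of these two M\"obius transformations (up to a common scalar $\lambda\neq0$) gives the system
\begin{equation*}
\IM h=\lambda\cos\alpha,\quad \mu\IM h=\lambda\sin\alpha,\quad \mu-\RE h=\lambda\sin\alpha,\quad \mu\RE h-|h|^2=-\lambda\cos\alpha.
\end{equation*}
From the first and third equations $\mu\IM h=\mu-\RE h$; combined with the first and fourth, $\RE h(\mu\IM h)=|h|^2-\IM h$, i.e. $\RE h(\mu-\RE h)=|h|^2-\IM h$, hence $\mu\RE h=|h|^2-\IM h+(\RE h)^2=2(\RE h)^2+(\IM h)^2-\IM h$. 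The hard part is solving this small nonlinear system cleanly: I expect it to force $\RE h=0$ first (comparing $\mu\IM h=\mu-\RE h$ with the expression just derived for $\mu\RE h$ and using $\IM h>0$), then $\IM h=1$, and finally $\mu=\tan\alpha$ from $\mu\IM h=\lambda\sin\alpha=\mu$ and $\IM h=\lambda\cos\alpha$, so $\mu=\tan\alpha$. An alternative, perhaps cleaner route for the converse is to note that $h=i$ already follows from Theorem~\ref{t-6} applied to the L-system obtained from $\Theta_{\mu,h}$ by the change of parameter $\mu\mapsto\infty$ (or directly to the impedance function, since the main operator $T_h$ is common to the whole family and its boundary parameter $h$ is an invariant of $T_h$); then with $h=i$ fixed, \eqref{1501} reduces $V_{\Theta_{\mu,i}}(z)=-m_\alpha(z)$ to $\dfrac{m_\infty+\mu}{\mu m_\infty-1}=\dfrac{\sin\alpha+m_\infty\cos\alpha}{-\cos\alpha+m_\infty\sin\alpha}$, and cross-multiplying and using that $m_\infty$ is nonconstant yields $\mu=\tan\alpha$ by comparing coefficients of $m_\infty$ and of $1$.

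The main obstacle is the converse direction, specifically justifying that the identity of the two impedance functions forces an identity of the corresponding M\"obius transformations in the variable $m_\infty$ — this needs the (standard, but worth stating) fact that $m_\infty(\dC_+)$ is an open set, so the range is not contained in any curve where a nonzero M\"obius difference could vanish — and then carefully solving the resulting algebraic system to pin down $\RE h=0$, $\IM h=1$, $\mu=\tan\alpha$ without spurious branches. Given the machinery already assembled (Theorems~\ref{t-6}--\ref{t-8} for the base cases $\alpha=\pi,\pi/2$ and the explicit formula \eqref{1501}), I expect the write-up to be short: one computation for existence, and a coefficient-matching argument, bolstered by an appeal to Theorem~\ref{t-6} or \ref{t-7}, for uniqueness.
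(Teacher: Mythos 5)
This theorem is quoted from \cite{BT18}; the present paper gives no proof of it, so your proposal can only be measured against the natural argument (which is what \cite{BT18} does, and which specializes to Theorems~\ref{t-6} and \ref{t-7} for $\alpha=\pi$ and $\alpha=\pi/2$). Your overall strategy is exactly right: for existence, set $h=i$, $\mu=\tan\alpha$ in \eqref{1501} and compare with \eqref{e-61-Don}; for uniqueness, use that $m_\infty$ is nonconstant so the two M\"obius transformations of $m_\infty$ must coincide (three points in the range already suffice; openness is not needed), and both M\"obius maps are nondegenerate since $\det=-1$ for $-m_\alpha$ and $\det=-\IM h\,|\mu-h|^2\ne 0$ for $V_{\Theta_{\mu,h}}$.

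There is, however, an algebra slip in your converse. From the proportionality system the two clean consequences are
\begin{equation*}
\mu\,\IM h=\mu-\RE h \qquad\text{(from the 2nd and 3rd equations)},\qquad \mu\,\RE h=|h|^2-\IM h \qquad\text{(from the 1st and 4th)};
\end{equation*}
your intermediate identity $\RE h\,(\mu\,\IM h)=|h|^2-\IM h$ is not one of these and does not follow from them. The correct finish is: write $a=\RE h$, $b=\IM h>0$; the first relation gives $a=\mu(1-b)$, and substituting into $\mu a=a^2+b^2-b$ yields $\mu^2(1-b)=\mu^2(1-b)^2-b(1-b)$, i.e. $b(1-b)(\mu^2+1)=0$, forcing $b=1$ and then $a=0$, so $h=i$; finally $\mu=\lambda\sin\alpha$ and $1=\IM h=\lambda\cos\alpha$ give $\mu=\tan\alpha$ (with $\cos\alpha\ne0$ automatic since $\lambda\cos\alpha=\IM h>0$, the case $\mu=\infty$ being the degenerate $\alpha=\pi/2$ case already covered by Theorem~\ref{t-7}). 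With that repair the argument is complete and coincides with the intended one.
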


Clearly, when $\alpha=\pi$ we obtain $\mu_\alpha=0$, $m_\pi(z)=m_\infty(z)$, and the realizing  L-system $\Theta_{0,i}$ is thoroughly described  in \cite[Section 5]{BT18}. If $\alpha=\pi/2$, then  we get $\mu_\alpha=\infty$, $-m_\alpha(z)=1/m_\infty(z)$, and the realizing  L-system is $\Theta_{\infty,i}$ (see \cite[Section 5]{BT18}). 
Excluding the cases when $\alpha=\pi$ or $\alpha=\pi/2$, we give the description of a Schr\"odinger L-system $\Theta_{\mu_\alpha,i}$ realizing $-m_\alpha(z)$ for $\alpha\in(0,\pi]$ as follows
\begin{equation}\label{e-64-sys}
    \Theta_{\tan\alpha, i}= \begin{pmatrix} \bA_{\tan\alpha, i}&K_{\tan\alpha, i}&1\cr \calH_+ \subset
L_2[\ell,+\infty) \subset \calH_-& &\dC\cr \end{pmatrix},
\end{equation}
where
\begin{equation}\label{e-65-star}
\begin{split}
&\bA_{\tan\alpha,i}\, y=l(y)-\frac{1}{\tan\alpha-i}[y^{\prime}(\ell)-iy(\ell)][(\tan\alpha)\delta(x-\ell)+\delta^{\prime}(x-\ell)], \\
&\bA^*_{\tan\alpha,i}\, y=l(y)-\frac{1}{\tan\alpha+i}\,[y^{\prime}(\ell)+iy(\ell)][(\tan\alpha)\delta(x-\ell)+\delta^{\prime}(x-\ell)],
\end{split}
\end{equation}
$K_{\tan\alpha, i}\,{c}=c\,g_{\tan\alpha, i}$, $(c\in \dC)$ and
\begin{equation}\label{e-66-g}
g_{\tan\alpha, i}=(\tan\alpha)\delta(x-\ell)+\delta^{\prime}(x-\ell).
\end{equation}
It is also worth mentioning that
\begin{equation}\label{e-71-VW}
\begin{aligned}
    V_{\Theta_{\tan\alpha, i}}(z)&=-m_\alpha(z)\\
     W_{\Theta_{\tan\alpha, i}}(z)&=\frac{\tan\alpha-i}{\tan\alpha+i}\cdot\frac{m_\infty(z)-i}{m_\infty(z)+i}=(-e^{2\alpha i})\,\frac{m_\infty(z)-i}{m_\infty(z)+i}.
    \end{aligned}
\end{equation}

Similar to Theorem \ref{t-7} results  for the functions $1/m_\alpha(z)$ can be found in \cite{BT18}.


\section{Accumulative Schr\"odinger L-systems}\label{s7}

In this section we assume  that  $\dA$ is a \textit{non-negative} (i.e., $(\dA f,f) \geq 0$ for all $f \in \dom(\dA)$) symmetric operator  of the form \eqref{128} with deficiency indices (1,1), generated by the differential operation $l(y)=-y^{\prime\prime}+q(x)y$. The following theorem takes place.
\begin{theorem}[\cite{T87},  \cite{Ts81}, \cite{Ts80}]\label{t-10}
Let $\dA$ be a nonnegative symmetric Schr\"odinger operator of the form \eqref{128} with deficiency indices $(1, 1)$ and locally summable potential in $\calH=L^2[\ell,\infty).$ Consider operator $T_h$ of the form \eqref{131}.  Then
 \begin{enumerate}
\item operator $\dA$ has more than one non-negative self-adjoint extension, i.e., the Friedrichs extension $A_F$ and the Kre\u{\i}n-von Neumann extension $A_K$ do not coincide, if and only if $m_{\infty}(-0)<\infty$;
 \item operator $T_h$, ($h=\bar h$) coincides with the Kre\u{\i}n-von Neumann extension $A_K$ if and  only if $h=-m_{\infty}(-0)$;
\item operator $T_h$ is accretive if and only if
\begin{equation}\label{138}
\RE h\geq-m_\infty(-0);
\end{equation}
\item operator $T_h$, ($h\ne\bar h$) is $\beta$-sectorial if and only if  $\RE h >-m_{\infty}(-0)$ holds;
\item operator $T_h$, ($h\ne\bar h$) is accretive but not $\beta$-sectorial for any $\beta\in (0, \frac{\pi}{2})$ if and only if $\RE h=-m_{\infty}(-0)$ \item If $T_h, (\IM h>0)$ is $\beta$-sectorial,
then the exact angle $\beta$ can be calculated via
\begin{equation}\label{e10-45}
\tan\beta=\frac{\IM h}{\RE h+m_{\infty}(-0)}.
\end{equation}
\end{enumerate}
\end{theorem}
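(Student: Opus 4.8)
The plan is to reduce everything to the known theory of the Weyl--Titchmarsh function and the operator $T_h$ on $\calH=L^2[\ell,\infty)$, for which the basic facts are classical (Friedrichs and Kre\u\i{}n--von Neumann extensions, the Kre\u\i{}n formula, and the behaviour of $m_\infty$ near $0$ and $-\infty$). The key analytic input is that for a nonnegative $\dA$ the Weyl function $(-m_\infty(z))$ is an inverse Stieltjes function holomorphic in $\Ext[0,+\infty)$, so the limit $m_\infty(-0)=\lim_{x\to -0}m_\infty(x)$ exists in $(-\infty,+\infty]$ and is monotone; moreover the quadratic form of $T_h$ can be written using the boundary triple $\{\dC,\Gamma_0,\Gamma_1\}$ with $\Gamma_0 y=y(\ell)$, $\Gamma_1 y=y'(\ell)$, so that for $y\in\dom(T_h)$ (i.e. $hy(\ell)=y'(\ell)$) one gets, after integration by parts,
\begin{equation*}
(T_h y,y)=\int_\ell^\infty\bigl(|y'|^2+q|y|^2\bigr)\,dx-h\,|y(\ell)|^2 .
\end{equation*}
Here the integral term is a nonnegative sesquilinear form bounded below by the Friedrichs form, and the Friedrichs extension corresponds exactly to $y(\ell)=0$; tracking its relation to $m_\infty(-0)$ is the content of item (1), already stated.

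First I would establish (1)--(2): the standard Kre\u\i{}n--Birman--Vishik description says the nonnegative self-adjoint extensions of $\dA$ are parametrized by $\tau\in[0,+\infty]$ via the boundary condition $y'(\ell)=\tau\,y(\ell)$ (up to sign conventions), with $\tau=\infty$ giving $A_F$ and $\tau$ equal to the "lower" endpoint giving $A_K$. The endpoint is governed by the value $-m_\infty(-0)$: one shows $A_K$ is the extension with boundary parameter $h=-m_\infty(-0)$, which is finite precisely when $m_\infty(-0)<\infty$, i.e. precisely when $A_F\ne A_K$. This is a direct consequence of the fact that $m_\infty$ is the $Q$-function of the triple and that the Kre\u\i{}n extension corresponds to the limit of the resolvent at the bottom of the spectrum; I would cite \cite{Ts80,Ts81,T87} as in the statement.

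Next, items (3)--(5) follow by examining $\RE(T_h y,y)$ and $\IM(T_h y,y)$ from the displayed form. Writing $h=a+ib$ with $b=\IM h>0$, we have $\RE(T_h y,y)=\mathfrak f[y]-a|y(\ell)|^2$ and $\IM(T_h y,y)=-b|y(\ell)|^2$, where $\mathfrak f[y]$ is the nonnegative form above. Accretivity, $\RE(T_h y,y)\ge 0$ for all $y\in\dom(T_h)$, amounts to $\mathfrak f[y]\ge a|y(\ell)|^2$ on the domain; by the variational characterization of $A_F$ versus $A_K$ this infimum of $\mathfrak f[y]/|y(\ell)|^2$ over the relevant domain equals exactly $-m_\infty(-0)$, which gives \eqref{138}. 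The strict inequality $\RE h>-m_\infty(-0)$ then yields a uniform bound $\RE(T_h y,y)\ge c\,|y(\ell)|^2\ge c'|\IM(T_h y,y)|$, i.e. $\beta$-sectoriality, proving (4); equality $\RE h=-m_\infty(-0)$ makes the ratio $|\IM(T_h y,y)|/\RE(T_h y,y)$ unbounded along a minimizing sequence, giving (5). The main obstacle is the precise identification of the infimum $\inf\{\mathfrak f[y]:y\in\dom(T_h),\,y(\ell)=1\}$ with $-m_\infty(-0)$; this requires the nonnegativity of $\dA$ and a careful limiting argument relating the quadratic form at energy $0$ to the Weyl function, and it is exactly here that the hypothesis $m_\infty(-0)<\infty$ enters.

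Finally, for (6) I would compute the exact angle directly. When $\RE h>-m_\infty(-0)$, the supremum
\begin{equation*}
\tan\beta=\sup_{y\in\dom(T_h)}\frac{|\IM(T_h y,y)|}{\RE(T_h y,y)}
=\sup_{y}\frac{b\,|y(\ell)|^2}{\mathfrak f[y]-a|y(\ell)|^2}
\end{equation*}
is attained (in the limit) along the same minimizing sequence for $\mathfrak f[y]/|y(\ell)|^2$, whose infimum is $-m_\infty(-0)$; substituting gives $\tan\beta=b/(a+m_\infty(-0))=\IM h/(\RE h+m_\infty(-0))$, which is \eqref{e10-45}. The verification that the same sequence simultaneously extremizes both ratios — equivalently, that the numerator is "purely boundary" while the denominator decomposes as a fixed boundary term minus the same form — is routine once the form representation is in hand, so the one genuinely nontrivial ingredient remains the form-infimum identification used throughout (3)--(6), for which I would lean on \cite{Ts80,Ts81,T87} and the inverse-Stieltjes nature of $(-m_\infty(z))$ recorded in Section \ref{s5}.
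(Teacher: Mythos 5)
The paper offers no proof of Theorem \ref{t-10}: it is imported verbatim from \cite{T87}, \cite{Ts81}, \cite{Ts80} and used as an external input, so there is no internal argument to compare yours against. Judged on its own, your form-theoretic route --- integrate by parts, isolate the boundary term, and reduce items (3)--(6) to identifying $\inf\{\mathfrak f[y]/|y(\ell)|^2\}$ with the boundary value of the Weyl function at $-0$ --- is exactly the mechanism behind the cited results, and your observation that the numerator and the $h$-dependent part of the denominator are both multiples of $|y(\ell)|^2$, so that a single minimizing sequence settles (3), (4), (5) and (6) at once, is the right key point.

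Two concrete sign errors must be fixed before the sketch closes. First, the integration by parts: for $y'(\ell)=hy(\ell)$ and $y$ in the maximal domain,
\begin{equation*}
(T_h y,y)=\bigl[-y'\bar y\bigr]_\ell^\infty+\int_\ell^\infty\bigl(|y'|^2+q|y|^2\bigr)\,dx
=\mathfrak f[y]+h\,|y(\ell)|^2 ,
\end{equation*}
not $\mathfrak f[y]-h|y(\ell)|^2$. With your sign, $\IM(T_hy,y)=-\IM h\,|y(\ell)|^2\le0$, contradicting the standing fact that $T_h$ with $\IM h>0$ is dissipative, and accretivity would come out as $\RE h\le m_\infty(-0)$ rather than \eqref{138}. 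Second, and correlated: the infimum of $\mathfrak f[y]/|y(\ell)|^2$ is $m_\infty(-0)$, not $-m_\infty(-0)$. Indeed, nonnegativity of the real extension reads $\mathfrak f[y]+h|y(\ell)|^2\ge0$, i.e.\ $h\ge-\inf$, and matching with items (2)--(3) forces $\inf=m_\infty(-0)$. As written, substituting your value of the infimum into your expression for the denominator yields $\tan\beta=\IM h/(-m_\infty(-0)-\RE h)<0$; the stated formula \eqref{e10-45} is recovered only after both signs are corrected. The one genuinely nontrivial ingredient --- that the relevant infimum, taken over $\dom(T_h)$ and not merely over $\calH_+$, equals $m_\infty(-0)$, which is where nonnegativity of $\dA$ and finiteness of $m_\infty(-0)$ enter --- you identify correctly as the crux, and deferring it to \cite{Ts80}, \cite{Ts81}, \cite{T87} is consistent with what the paper itself does.
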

In what follows,  we assume that $m_{\infty}(-0)<\infty$. Then according to Theorem \ref{t-10}  (see also \cite{AT2009}, \cite{Ts2}, \cite{Ts80}) the operator $T_h$, ($\IM h>0$) of the form \eqref{131} is accretive and/or sectorial.
 If in this case $T_h$ is  accretive, then (see \cite{ABT}) for all real $\mu$ satisfying the  inequality
\begin{equation}\label{151}
\mu \geq \frac {(\IM h)^2}{m_\infty(-0)+\RE h}+\RE h,
\end{equation}
formulas \eqref{137} define the set of all accretive $(*)$-extensions $\bA_{\mu, h}$ of  $T_h$. Moreover, $\bA_{\mu, h}$ is accretive but not  $\beta$-sectorial for any $\beta\in(0,\pi/2)$ ($*$)-extension of $T_h$
 if and only if in \eqref{137}
\begin{equation}\label{e10-134}
\mu=\frac{(\IM h)^2}{m_\infty(-0)+\RE h}+\RE h,
\end{equation}
 (see \cite[Theorem 4]{BT-15}). It is also shown in \cite{ABT} that $(*)$-extensions $\bA_{\mu, h}$ of the operator $T_h$ are accumulative  if and only if
\begin{equation}\label{250}
-m_\infty(-0) \leq \mu \leq \RE h.
\end{equation}
Using formulas \eqref{137} and direct calculations (see also \cite{BT-15}) one can obtain the formula for operator $\ti\bA_{\mu, h}$ of the form \eqref{e-14} as follows
\begin{equation}\label{e-27}
    \begin{aligned}
    \ti\bA_{\mu, h} y=-y^{\prime\prime}&+q(x)y-y'(a)\delta(x-a)-y(a)\delta'(x-a)\\
   & +\frac {1}{\mu-h}\,[y'(a)-hy(a)]\,[\mu \delta (x-a)+\delta'(x-a)].
    \end{aligned}
\end{equation}

Consider the functions $m_\alpha(z)$ described by \eqref{e-62-psi}-\eqref{e-59-LFT} and associated with the non-negative operator $\dA$ above. Let us observe how the parameter $\alpha$ in the definition of $m_\alpha(z)$ effects the  L-system realizing $(-m_\alpha(z))$. Part of this  question was answered in \cite[Theorem 6.3]{BT18}. It was shown that if the non-negative symmetric Schr\"odinger operator is such that $m_{\infty}(-0)\ge0$, then the L-system $\Theta_{\tan\alpha, i}$ of the form \eqref{e-64-sys} realizing the  function $(-m_\alpha(z))$ is accretive if and only if
\begin{equation}\label{e-78-angles}
\tan\alpha\ge \frac{1}{m_{\infty}(-0)}.
\end{equation}
We are going to use inequality \eqref{250} to see the values of $\mu=\tan\alpha$ that generate accumulative  L-systems $\Theta_{\tan\alpha, i}$. This approach yields
\begin{equation}\label{e-39-ac-angles}
-m_\infty(-0) \leq\tan\alpha\leq 0.
\end{equation}
The established criteria for a function $(-m_\alpha(z))$ to be realized with an accretive or accumulative L-system $\Theta_{\tan\alpha, i}$ are graphically shown on Figure \ref{fig-1}. This figure describes the dependence of the properties of realizing $(-m_\alpha(z))$ L-systems on the value of $\mu$ and hence $\alpha$. The bold part of the real line depicts values of $\mu=\tan\alpha$ that produce accretive or accumulative L-systems $\Theta_{\mu, i}$.

\begin{figure}
  \begin{center}
  \includegraphics[width=120mm]{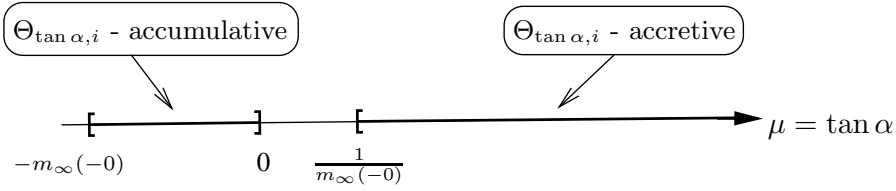}
  \caption{Accumulative and accretive L-systems $\Theta_{\tan\alpha, i}$.}\label{fig-1}
  \end{center}
\end{figure}

Note that if $m_\infty(-0)=0$ in \eqref{e-78-angles}, then $\alpha=\pi/2$ and $-m_{\frac{\pi}{2}}(z)={1}/{m_\infty(z)}$. Moreover,   we know that  if $m_\infty(-0)\ge0$, then ${1}/{m_\infty(z)}$ is realized by an accretive system $\Theta_{\infty, i}$ (see  \cite[Theorem 6.2]{BT18}). We also note that when $\tan\alpha=0$ and hence $\alpha=0$ we obtain  $m_0(z)=m_\infty(z)$, and the realizing $-m_\infty(z)$ Schr\"odinger L-system is $\Theta_{0,i}$. The following theorem shows  how the additional requirement of non-negativity affects the realization of functions $-m_\infty(z)$ and $1/m_\infty(z)$.
\begin{theorem}\label{t-11}%
Let $\dA$ be a non-negative symmetric Schr\"odinger operator of the form \eqref{128} with deficiency indices $(1, 1)$ and locally summable potential in $\calH=L^2[\ell, \infty).$ If $m_\infty(z)$ is the  Weyl-Titchmarsh function of $\dA$ such that $m_\infty(-0)\ge0$, then the L-system $\Theta_{0, i}$  realizing the  function $(-m_\infty(z))$ is accumulative and the L-system $\Theta_{\infty, i}$  realizing the  function $1/m_\infty(z)$ is accretive.
 \end{theorem}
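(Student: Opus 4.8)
The plan is to read off both claims from the accretivity/accumulativity criteria for the $(*)$-extensions $\bA_{\mu,h}$ already recorded in this section, specialized to the parameters $(\mu,h)=(0,i)$ and $(\mu,h)=(\infty,i)$ that, by Theorems~\ref{t-6} and~\ref{t-7}, produce the L-systems $\Theta_{0,i}$ and $\Theta_{\infty,i}$ realizing $(-m_\infty(z))$ and $1/m_\infty(z)$. Since the hypothesis includes $m_\infty(-0)\ge0$ (in particular $m_\infty(-0)<\infty$) and $\dA$ is non-negative, all the criteria of this section are in force.

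For $\Theta_{0,i}$ one has $h=i$, so $\RE h=0$ and $\IM h=1$, and the relevant parameter is $\mu=0$. By \eqref{250}, $\bA_{\mu,i}$ is an accumulative $(*)$-extension of $T_i$ precisely when $-m_\infty(-0)\le\mu\le\RE h=0$. With $\mu=0$ the right-hand inequality is an equality, while the left-hand inequality $-m_\infty(-0)\le0$ is exactly the assumption $m_\infty(-0)\ge0$. Hence $\bA_{0,i}$ is accumulative, i.e.\ $\Theta_{0,i}$ is an accumulative L-system; equivalently, $\mu=0$ is the endpoint $\tan\alpha=0$ of the range \eqref{e-39-ac-angles}. (Because \eqref{250} is an equivalence, the accumulativity of $\Theta_{0,i}$ is in fact equivalent to $m_\infty(-0)\ge0$.) For $\Theta_{\infty,i}$ one again has $h=i$, and since $\RE h=0\ge-m_\infty(-0)$, Theorem~\ref{t-10}(3) gives that $T_i$ is accretive. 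Then \eqref{151} says that for accretive $T_i$ the $(*)$-extension $\bA_{\mu,i}$ is accretive for every (extended) real $\mu$ with $\mu\ge\frac{(\IM h)^2}{m_\infty(-0)+\RE h}+\RE h=\frac{1}{m_\infty(-0)}$; the value $\mu=\infty$ satisfies this bound (including the limiting case $m_\infty(-0)=0$, where $\alpha=\pi/2$ and $-m_{\pi/2}(z)=1/m_\infty(z)$), so $\bA_{\infty,i}$ is accretive and $\Theta_{\infty,i}$ is an accretive L-system. Equivalently, $\mu=\infty$ is the endpoint $\tan\alpha=+\infty$ of the range \eqref{e-78-angles}.

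The only delicate point is the treatment of the limiting value $\mu=\infty$ in the second assertion, since \eqref{151} is phrased for finite $\mu$: one should either interpret \eqref{137} in the limit $\mu\to\infty$ (using $\mu/(\mu-h)\to1$ and $1/(\mu-h)\to0$, so that $\bA_{\infty,i}y=l(y)-[y'(\ell)-iy(\ell)]\delta(x-\ell)$ and $g_{\infty,i}=(\IM i)^{1/2}\delta(x-\ell)$) and check $\RE(\bA_{\infty,i}f,f)\ge0$ for $f\in\calH_+$ directly, or invoke the continuity of the parametrization of the $(*)$-extensions by $\mu\in[-\infty,+\infty]$ that was already used in Theorem~\ref{t-7} to realize $1/m_\infty(z)$ by $\Theta_{\infty,i}$. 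Apart from this boundary bookkeeping, no computation beyond substituting the specific parameter values into the cited criteria is required; the non-negativity of $\dA$ is what makes \eqref{250}, \eqref{151} and Theorem~\ref{t-10} applicable in the first place, which is precisely the new ingredient compared with Theorems~\ref{t-6} and~\ref{t-7}.
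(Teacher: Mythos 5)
Your proposal is correct and follows essentially the same route as the paper: the accumulativity of $\Theta_{0,i}$ is read off from the criterion \eqref{250} (equivalently \eqref{e-39-ac-angles}) at the endpoint $\mu=\tan\alpha=0$, using $-m_\infty(-0)\le 0$, and the accretivity of $\Theta_{\infty,i}$ follows from the accretivity criterion for $(*)$-extensions at $\mu=\infty$ (the paper simply cites \cite{BT18} for this second claim rather than re-deriving it from \eqref{151}). Your extra care with the limiting value $\mu=\infty$ is a reasonable elaboration of what the citation covers, not a different argument.
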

\begin{proof}
Since $m_\infty(-0)\ge0$, we can apply  \eqref{e-39-ac-angles} to conclude that $-m_0(z)=-m_\infty(z)\le0$ implies that the L-system $\Theta_{0, i}$  realizing the  function $(-m_\infty(z))$ is accumulative (see \cite[Section 9.9]{ABT}). The fact that the L-system $\Theta_{\infty, i}$  realizing the  function $1/m_\infty(z)$ is accretive under the conditions of current theorem was proved in \cite{BT18}.
\end{proof}
\begin{remark}\label{r-1}
 Some of analytic properties of the functions $(-m_\infty(z))$, $1/m_\infty(z)$, and $(-m_\alpha(z))$ were described in \cite[Theorem 6.5]{BT18}. Taking into account these results and  the above reasoning we have that under the current set of assumptions:
 \begin{enumerate}
   \item the  function $1/m_\infty(z)$ is Stieltjes if and only if $m_\infty(-0)\ge0$;
   \item the  function $(-m_\infty(z))$ is inverse Stieltjes  if and only if $m_\infty(-0)\ge0$;
   \item the  function $(-m_\alpha(z))$ given by \eqref{e-59-LFT} is Stieltjes if and only if $$0<\frac{1}{m_\infty(-0)}\le\tan\alpha,$$
   and inverse Stieltjes if and only if $$-m_\infty(-0)\le\tan\alpha\le0.$$
 \end{enumerate}
\end{remark}

Now once we established a criteria for an L-system realizing $(-m_\alpha(z))$ to be accumulative, we can look into more of its properties.
We are going to turn to the case when our realizing L-system $\Theta_{\tan\alpha, i}$ is accumulative sectorial.
To begin with let  $\Theta_{\mu, h}$ be an L-system of the form \eqref{149}, where $\bA_{\mu, h}$ is an accumulative ($*$)-extension \eqref{137} of the accretive Schr\"odinger operator $T_h$. Let also $\ti\bA_{\mu, h}$ be of the form \eqref{e-27}. Below  is the list of some known facts about possible accumulativity and  sectoriality of $\Theta_{\mu, h}$.
\begin{itemize}
 \item If $\ti\bA_{\mu, h}$ of the form \eqref{e-27} is  $\beta$-sectorial, then the impedance function $V_{\Theta_{\mu, h}}(z)$ defined by \eqref{e6-3-5} belongs to the class $S^{-1,\beta_1,\beta_2}$.
  \item The operator $T_h$ of $\Theta_{\mu, h}$ is $(\beta_2-\beta_1)$-sectorial     with the exact  angle of sectoriality $(\beta_2-\beta_1)$, and $\tan\beta_2\le\tan\beta$.
  \item In the case when $\beta_1=0$ and $\beta_2=\pi/2$ the operator $T_h$ is accretive but not $\beta$-sectorial.
  \item If   $\beta$ is the exact angle of sectoriality of the operator $T_h$, then  $V_{\Theta_{\mu, h}}(z)\in S^{-1,0,\beta}$.
    \item if  the impedance function $V_{\Theta_{\mu, h}}(z)$ belongs to the class  $S^{-1,\beta_1,\beta_2}$, then $\ti\bA_{\mu, h}$ is $\beta$-sectorial, where $\tan\beta$ is defined via \eqref{e9-176}.
  \item   Both $\ti\bA_{\mu, h}$ and $T_h$ are $\beta$-sectorial  operators  with  the exact angle $\beta\in(0,\pi/2)$ if and only if  $V_{\Theta_{\mu, h}}(z) \in S^{-1,0,\beta}$ and $\tan\beta$ is given by \eqref{e9-178-new-i}.
\end{itemize}

Consider a function $(-m_\alpha(z))$ and Schr\"odinger L-system $\Theta_{\tan\alpha, i}$ of the form \eqref{e-64-sys} that realizes it. According to Theorem \ref{t-11} this L-system $\Theta_{\tan\alpha, i}$ can be accumulative if and only if \eqref{e-39-ac-angles} holds, that is $-m_\infty(-0)\le\tan\alpha\le0.$ Moreover, according to \cite[Theorem 6]{BT-15}, $\Theta_{\tan\alpha, i}$ is accumulative sectorial if and only if
\begin{equation}\label{e-45}
-m_\infty(-0)\le\tan\alpha<0,
\end{equation}
and accumulative extremal (see \cite[Theorem 7]{BT-15}) if and only if $\tan\alpha=0$. Also, if we assume that L-system $\Theta_{\tan\alpha, i}$ is $\beta$-sectorial, then its impedance function \break $V_{\Theta_{\tan\alpha, i}}(z)=-m_\alpha(z)$ belongs (see \cite{BT14}) to certain sectorial classes of inverse Stieltjes functions  discussed in Section \ref{s3}. Namely, $(-m_\alpha(z))\in S^{-1,\beta}$. The following theorem provides more refined properties of $(-m_\alpha(z))$ for this case.
\begin{theorem}\label{t-15}
Let $\Theta_{\tan\alpha, i}$ be the accumulative L-system of the form \eqref{e-64-sys} realizing the  function  $(-m_\alpha(z))$  associated with the non-negative operator $\dA$. Let also $\ti\bA_{\tan\alpha,i}$ be a $\beta$-sectorial operator associated with $\Theta_{\tan\alpha, i}$ and  defined by \eqref{e-14}. Then the  function $(-m_\alpha(z))$  belongs to the class $S^{-1,\beta_1,\beta_2}$, $\tan\beta_2\le\tan\beta$, and
\begin{equation}\label{e-46}
    \tan\beta_1=\frac{\tan\alpha+m_\infty(-0)}{1-(\tan\alpha) m_\infty(-0)},
\end{equation}
and
\begin{equation}\label{e-47}
    \tan\beta_2=-\cot\alpha.
\end{equation}
Moreover, the operator $T_i$ is $(\beta_2-\beta_1)$-sectorial with the exact  angle of sectoriality $(\beta_2-\beta_1)$.
 \end{theorem}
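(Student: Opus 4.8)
The plan is to combine the abstract structural facts about accumulative sectorial L-systems (already collected in the bulleted list and in Section~\ref{s3}) with the explicit spectral formulas for the Schr\"odinger L-system $\Theta_{\tan\alpha,i}$. Since we assume $\ti\bA_{\tan\alpha,i}$ is $\beta$-sectorial, the first bullet gives immediately that $V_{\Theta_{\tan\alpha,i}}(z)=-m_\alpha(z)\in S^{-1,\beta_1,\beta_2}$ for some $0\le\beta_1<\pi/2$, $0<\beta_2\le\pi/2$, with $\tan\beta_2\le\tan\beta$, and the last sentence of the theorem (the $(\beta_2-\beta_1)$-sectoriality of $T_i$ with exact angle) is then exactly the second bullet specialized to $h=i$. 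So the real content is to identify the two numbers $\tan\beta_1$ and $\tan\beta_2$ in terms of $\alpha$ and $m_\infty(-0)$ via the defining limit relations \eqref{e9-156-i}, namely $\tan(\pi-\beta_1)=\lim_{x\to 0}V_{\Theta_{\tan\alpha,i}}(x)$ and $\tan(\pi-\beta_2)=\lim_{x\to-\infty}V_{\Theta_{\tan\alpha,i}}(x)$.

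First I would compute $\lim_{x\to 0}(-m_\alpha(x))$. Using \eqref{e-61-Don}, $-m_\alpha(z)=\dfrac{\sin\alpha+m_\infty(z)\cos\alpha}{-\cos\alpha+m_\infty(z)\sin\alpha}$, and substituting $m_\infty(-0)$ (finite by our standing assumption) for $m_\infty(x)$ as $x\to 0^-$ gives
\[
\lim_{x\to 0}(-m_\alpha(x))=\frac{\sin\alpha+m_\infty(-0)\cos\alpha}{-\cos\alpha+m_\infty(-0)\sin\alpha}
=\frac{\tan\alpha+m_\infty(-0)}{-1+m_\infty(-0)\tan\alpha},
\]
after dividing numerator and denominator by $\cos\alpha$ (valid since $\tan\alpha$ appears, i.e.\ $\alpha\ne\pi/2$; the degenerate cases are excluded as in Section~\ref{s5}). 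Since $\tan(\pi-\beta_1)=-\tan\beta_1$, this yields \eqref{e-46}. Next I would compute $\lim_{x\to-\infty}(-m_\alpha(x))$. It is classical (and used throughout the $m$-function literature, cf.\ \cite{Levitan}, \cite{Na68}) that $m_\infty(x)\to -\infty$ as $x\to-\infty$; feeding $m_\infty(x)\to\infty$ in absolute value into \eqref{e-61-Don} gives
\[
\lim_{x\to-\infty}(-m_\alpha(x))=\lim_{m\to-\infty}\frac{\sin\alpha+m\cos\alpha}{-\cos\alpha+m\sin\alpha}=\frac{\cos\alpha}{\sin\alpha}=\cot\alpha,
\]
so $\tan(\pi-\beta_2)=-\tan\beta_2=\cot\alpha$, i.e.\ $\tan\beta_2=-\cot\alpha$, which is \eqref{e-47}. (Consistency check: on the accumulative sectorial range \eqref{e-45}, $\tan\alpha\in[-m_\infty(-0),0)$, so $\cot\alpha\le 0$ and $-\cot\alpha\ge 0$, as it must be for an angle $\beta_2\in(0,\pi/2]$.)

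The remaining assertions are then read off: $(-m_\alpha(z))\in S^{-1,\beta_1,\beta_2}$ and $\tan\beta_2\le\tan\beta$ come from the first bullet of the list in Section~\ref{s7} (with the $S^{-1,\beta}$ membership guaranteed by \cite{BT14} since $\ti\bA_{\tan\alpha,i}$ is $\beta$-sectorial), and $T_i$ being $(\beta_2-\beta_1)$-sectorial with exact angle $(\beta_2-\beta_1)$ is the second bullet. One may also sanity-check the difference formula: a short trigonometric computation gives $\tan(\beta_2-\beta_1)=\dfrac{\tan\beta_2-\tan\beta_1}{1+\tan\beta_1\tan\beta_2}$, and substituting \eqref{e-46}--\eqref{e-47} should collapse to the exact sectoriality angle of $T_i$ predicted by \eqref{e10-45} with $h=i$, namely $\tan\beta=\dfrac{1}{\RE h+m_\infty(-0)}=\dfrac{1}{m_\infty(-0)}$ when $\RE h=0$; I would include this as a remark rather than belabor it. The main obstacle is purely analytic bookkeeping at the two boundary limits: justifying that $m_\infty(x)$ indeed has the claimed finite limit at $0^-$ (this is where $m_\infty(-0)<\infty$, equivalently $A_F\ne A_K$ by Theorem~\ref{t-10}, is essential) and the limit $-\infty$ at $-\infty$, and then carefully tracking the sign of $\cos\alpha$ so that dividing through by it in \eqref{e-61-Don} is legitimate on the relevant parameter range $\tan\alpha\in[-m_\infty(-0),0)$.
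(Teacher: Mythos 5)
Your proposal is correct and follows essentially the same route as the paper: identify $\beta_1,\beta_2$ through the limit relations \eqref{e9-156-i} applied to $-m_\alpha$ written in the form \eqref{e-48}, evaluate the limits at $-0$ (using $m_\infty(-0)<\infty$) and at $-\infty$ (using $|m_\infty(x)|\to\infty$), and invoke the $S^{-1,\beta_1,\beta_2}$ membership and the $(\beta_2-\beta_1)$-sectoriality of $T_i$ from the cited results of \cite{BT14}. Your extra remarks on the sign of $\cos\alpha$ and the consistency with \eqref{e10-45} are sound but not part of the paper's argument.
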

 \begin{proof}
It is given that $\Theta_{\tan\alpha, i}$ is accumulative and hence \eqref{e-45} holds. For further convenience we re-write $(-m_\alpha(z))$ as
\begin{equation}\label{e-48}
    -m_\alpha(z)=\frac{\sin\alpha+m_\infty(z)\cos\alpha}{-\cos\alpha+m_\infty(z)\sin\alpha}
    =\frac{\tan\alpha+m_\infty(z)}{(\tan\alpha) m_\infty(z)-1}.
\end{equation}
Since under our assumption  $\ti\bA_{\tan\alpha,i}$ is $\beta$-sectorial, then (see \cite{BT14}, \cite{BT-15}) the impedance function $V_{\Theta_{\tan\alpha, i}}(z)=-m_\alpha(z)$ belongs to certain sectorial classes discussed in Section \ref{s3}. Particularly, $-m_\alpha(z)\in S^{-1,\beta}$ and $-m_\alpha(z)\in S^{-1,\beta_1,\beta_2}$, where (see \cite{BT-15})
\begin{equation*}
     \tan(\pi-\beta_1)= - \tan\beta_1=\lim_{x\to-0}(-m_\alpha(x))=\frac{\tan\alpha+m_\infty(-0)}{(\tan\alpha) m_\infty(-0)-1},
\end{equation*}
and
\begin{equation*}
 \begin{aligned}
 \tan(\pi-\beta_2)&= -\tan\beta_2=\lim_{x\to-\infty}(-m_\alpha(x))=\frac{\tan\alpha+m_\infty(-\infty)}{(\tan\alpha) m_\infty(-\infty)-1}\\
 &=\frac{\frac{\tan\alpha}{m_\infty(-\infty)}+1}{\tan\alpha -\frac{1}{m_\infty(-\infty)}}=\frac{1}{\tan\alpha}=\cot\alpha.
    \end{aligned}
\end{equation*}
Multiplying the above by $(-1)$ one confirms \eqref{e-46} and \eqref{e-47}. In order to show the rest, we apply \cite[Theorem 9]{BT14}. This theorem states that if  $\ti\bA$ is a $\beta$-sectorial operator of the form \eqref{e6-3-5} associated to an accumulative L-system $\Theta$, then the impedance function $V_\Theta(z)$ belongs to the class $S^{-1,\beta_1,\beta_2}$, $\tan\beta_2\le\tan\beta$, and  $T$ is $(\beta_2-\beta_1)$-sectorial with the exact  angle of sectoriality $(\beta_2-\beta_1)$.
 \end{proof}

\begin{figure}
  \begin{center}
  \includegraphics[width=120mm]{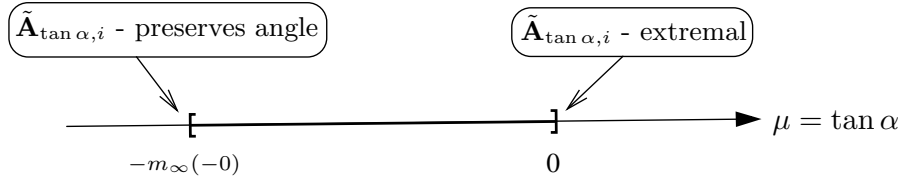}
  \caption{Associated operator $\ti{\bA}_{\tan\alpha, i}$.}\label{fig-2}
  \end{center}
\end{figure}

The next theorem explains two ``endpoint" cases of accumulative realization for the function  $(-m_\alpha(z))$.

\begin{theorem}\label{t10-17-i}
Let $\Theta_{\tan\alpha, i}$ be the accumulative L-system of the form \eqref{e-64-sys} realizing the  function  $(-m_\alpha(z))$   with a sectorial main operator $T_i$ whose  exact angle of sectoriality is $\beta\in(0,\pi/2)$.  Let also $\ti\bA_{\tan\alpha,i}$ be an associated operator defined by \eqref{e6-3-5}.  Then
\begin{enumerate}
  \item $\ti\bA_{\tan\alpha,i}$ is  $\beta$-sectorial (with the same angle of sectoriality as $T_i$) if and only if $\tan\alpha=-m_\infty(-0)$ in \eqref{e-27};
  \item $\ti\bA_{\tan\alpha,i}$ is accretive but not  $\beta$-sectorial for any $\beta\in(0,\pi/2)$
 if and only if in \eqref{137} $\alpha=0.$
\end{enumerate}
\end{theorem}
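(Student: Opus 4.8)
The plan is to derive both equivalences from Theorem~\ref{t-15} together with Theorem~\ref{t-10}, thinking of $\Theta_{\tan\alpha,i}$ as the special case $\mu=\tan\alpha$, $h=i$ of the general accumulative Schr\"odinger L-system. First I would record what the hypotheses force: the main operator is $T_i$ of the form \eqref{e-56-T}, whose exact angle of sectoriality $\beta$ is, by item~(6) of Theorem~\ref{t-10} with $h=i$, given by $\tan\beta=\dfrac{1}{m_\infty(-0)}$ (note $\RE h=0$, $\IM h=1$). Accumulativity of $\Theta_{\tan\alpha,i}$ means $-m_\infty(-0)\le\tan\alpha\le 0$ by \eqref{e-39-ac-angles}, and the sectoriality of $\ti\bA_{\tan\alpha,i}$ in part~(1) plus Theorem~\ref{t-15} places $(-m_\alpha(z))$ in the class $S^{-1,\beta_1,\beta_2}$ with $\tan\beta_1$, $\tan\beta_2$ given by \eqref{e-46}, \eqref{e-47}.

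For part~(1) the key identity connecting $T_i$ and $\ti\bA_{\tan\alpha,i}$ is the one quoted from \cite{BT14}: under the stated assumptions $T=T_i$ is $(\beta_2-\beta_1)$-sectorial with exact angle $\beta_2-\beta_1$. Hence $\ti\bA_{\tan\alpha,i}$ has the same angle of sectoriality as $T_i$ exactly when $\beta=\beta_2-\beta_1$ coincides with the $\beta$ in \eqref{e9-176}, which (by the last bullet before Theorem~\ref{t-15}, or directly) happens iff $\beta_1=0$, i.e. iff $\tan\beta_1=0$. Plugging $\tan\beta_1=0$ into \eqref{e-46} gives $\tan\alpha+m_\infty(-0)=0$, that is $\tan\alpha=-m_\infty(-0)$. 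Conversely, if $\tan\alpha=-m_\infty(-0)$, then $\tan\beta_1=0$, so $V_{\Theta_{\tan\alpha,i}}(z)\in S^{-1,0,\beta_2}$; then the ``endpoint'' case of the results from Section~\ref{s3} (with $\beta_1=0$) yields that $\ti\bA_{\tan\alpha,i}$ and $T_i$ are $\beta$-sectorial with the same exact angle $\beta=\beta_2$. One should double-check consistency: with $\tan\alpha=-m_\infty(-0)$, \eqref{e-47} gives $\tan\beta_2=-\cot\alpha=1/\tan\alpha\cdot(-1)\cdot(-1)$... here I would verify that $\tan\beta_2=1/m_\infty(-0)=\tan\beta$, confirming the ``same angle'' claim.

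For part~(2) I would identify when $\ti\bA_{\tan\alpha,i}$ is accretive but not $\beta$-sectorial for any $\beta$. By the theory of Section~\ref{s3} this is precisely the extremal case, corresponding (via the remark after \eqref{e9-176}) to $\beta_2=\pi/2$ in the $S^{-1,\beta_1,\beta_2}$ membership, i.e. $\tan\beta_2=+\infty$. From \eqref{e-47}, $\tan\beta_2=-\cot\alpha=\infty$ forces $\alpha=0$ (within the relevant range, since $\cot\alpha=0$ means $\alpha=\pi/2$, which is the \emph{other} endpoint; here the correct reading giving $\tan\beta_2\to\infty$ is $\sin\alpha\to 0$, i.e. $\alpha\to0$, equivalently $\tan\alpha=0$). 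Alternatively, and more cleanly, I would invoke the already-cited fact from \cite[Theorem~7]{BT-15}: $\Theta_{\tan\alpha,i}$ is accumulative extremal iff $\tan\alpha=0$, and $\mu=\tan\alpha=0$ matches \eqref{e137} with the state-space operator at the accumulative boundary; combined with the definition of sectorial/extremal accumulative L-system via $\ti\bA$ (Section~\ref{s2}), this is exactly the statement that $\ti\bA_{\tan\alpha,i}$ is accretive but not $\beta$-sectorial. Since $\tan\alpha=0$ iff $\alpha=0$ in the admissible range $\alpha\in(0,\pi]$ (interpreting the endpoint), part~(2) follows.

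The main obstacle I anticipate is bookkeeping with the limits and the branch of $\alpha$: equation \eqref{e-47} reads $\tan\beta_2=-\cot\alpha$, and for accumulative systems $\tan\alpha\in[-m_\infty(-0),0]$, so $\alpha$ lies in a range where $\cot\alpha<0$ and $\tan\beta_2>0$ as required, with $\tan\beta_2\to\infty$ exactly as $\tan\alpha\to0$ and $\tan\beta_2\to\cot(\text{arctan}(-m_\infty(-0)))$-type value at the other endpoint. Getting these one-sided limits and the correspondence ``$\beta_1=0\Leftrightarrow$ same angle'' and ``$\beta_2=\pi/2\Leftrightarrow$ extremal'' precisely aligned with the cited theorems of \cite{BT14} and \cite{BT-15} — rather than any deep computation — is where care is needed; the underlying analytic facts are all already available from Section~\ref{s3} and Theorems~\ref{t-10}, \ref{t-15}.
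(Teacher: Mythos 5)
Your proposal is correct, and for part (1) it takes a genuinely more self-contained route than the paper. The paper's own proof is a one-line citation: it invokes Theorems 6 and 7 of \cite{BT-15} for the general accumulative $(*)$-extension $\bA_{\mu,h}$ and simply specializes the parameter to $\mu=\tan\alpha=-m_\infty(-0)$ for part (1) and to $\mu=\RE h=\tan 0=0$ for part (2). You instead reconstruct part (1) from the sectorial-class machinery of Section \ref{s3} together with Theorems \ref{t-10} and \ref{t-15}: since $T_i$ has exact angle $\beta_2-\beta_1$ while any sectoriality angle $\beta$ of $\ti\bA_{\tan\alpha,i}$ satisfies $\tan\beta_2\le\tan\beta$, coincidence of the angles forces $\beta_1=0$, and \eqref{e-46} converts $\tan\beta_1=0$ into $\tan\alpha=-m_\infty(-0)$; your consistency check $\tan\beta_2=-\cot\alpha=1/m_\infty(-0)=\tan\beta$ via \eqref{e-47} and \eqref{e10-45} is exactly the right one. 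For part (2) your ``cleaner'' alternative --- citing the extremality criterion $\tan\alpha=0$ from \cite[Theorem 7]{BT-15}, already recorded around \eqref{e-45} --- is precisely the paper's argument, and your limit computation $\tan\beta_2=-\cot\alpha\to+\infty$ as $\tan\alpha\to 0^-$ is a sound independent confirmation. What your route buys is verifiability inside the paper; what the citation buys is brevity plus the exactness statements in full strength (note that $\beta_1=0$ alone yields that $\ti\bA_{\tan\alpha,i}$ is $\beta_2$-sectorial, while exactness of that angle for $\ti\bA_{\tan\alpha,i}$ itself would additionally require \eqref{e9-178-new-i} --- harmless here, since the theorem only asserts ``$\beta$-sectorial with the same angle''). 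One typographical point: the reference ``(e137)'' in your part (2) should be \eqref{137}.
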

\begin{proof}
The proof directly follows from \cite[Theorems 6 and 7]{BT-15} after one sets $\mu=\tan\alpha=-m_\infty(-0)$ for part (1) and $\mu=\RE h=\tan 0=0$ for part (2).
 \end{proof}
The result of Theorem \ref{t10-17-i} is graphically illustrated by Figure \ref{fig-2}. Also we have shown that within the conditions of Theorem \ref{t10-17-i} the $\alpha$-sectorial sesquilinear  form $(f,T f)$ defined on a subspace $\dom(T)$ of $\calH_+$ can be extended to the $\alpha$-sectorial form $(\ti\bA f,f)$ defined on $\calH_+$ preserving the exact (for both forms) angle of sectoriality $\alpha$. A general problem of extending sectorial sesquilinear forms was mentioned by T.~Kato in \cite{Ka}.

Now we state and prove the following.
\begin{theorem}\label{t-16}
Let $\Theta_{\tan\alpha, i}$ be an accumulative  L-system of the form \eqref{e-64-sys} that realizes $(-m_\alpha(z))$ with the main   $\theta$-sectorial operator $T_i$ whose exact sectoriality angle   is $\theta$. Let also  $\alpha_*\in\left(\arctan(-m_{\infty}(-0)),0\right)$ be a fixed value that defines the associated operator $\ti\bA_{\tan\alpha_*,i}$ via \eqref{e6-3-5},  \eqref{137},   and  $(-m_\alpha(z))\in S^{-1,\beta_1,\beta_2}$.
Then the associated operator $\ti\bA_{\tan\alpha,i}$  is $\beta$-sectorial for any $\alpha\in \left(\arctan(-m_{\infty}(-0)),\alpha_*\right)$ with
\begin{equation}\label{e6-6}
 {\tan\beta=\tan\beta_1+2\sqrt{\tan\beta_1\,\tan\beta_2}}.
\end{equation}
Moreover, if $\alpha=\arctan(-m_{\infty}(-0))$, then $$\beta=\theta=\arctan\left(\frac{1}{m_{\infty}(-0)}\right).$$
\end{theorem}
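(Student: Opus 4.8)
The plan is to reduce Theorem \ref{t-16} to the results already collected in Section \ref{s3}, in particular to formula \eqref{e9-176} and to the ``endpoint'' information supplied by Theorems \ref{t-15} and \ref{t10-17-i}. First I would record that, by hypothesis, the accumulative L-system $\Theta_{\tan\alpha_*,i}$ has impedance function $V_{\Theta_{\tan\alpha_*,i}}(z)=-m_{\alpha_*}(z)$ lying in a class $S^{-1,\beta_1,\beta_2}$ with $\beta_2\ne\pi/2$ (the strict inequality $\alpha_*<0$ guarantees $\beta_2\ne\pi/2$, since by \eqref{e-47} $\tan\beta_2=-\cot\alpha_*$ is finite), so the block of known facts preceding Theorem \ref{t-15} — specifically the statement that $V_\Theta\in S^{-1,\beta_1,\beta_2}$ forces $\ti\bA$ to be $\beta$-sectorial with $\tan\beta$ given by \eqref{e9-176} — applies verbatim with $T=T_i$, $\ti\bA=\ti\bA_{\tan\alpha,i}$.

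The second step is the algebraic identification of $\tan\beta$. From Theorem \ref{t-15}, for a value $\alpha$ in the accumulative-sectorial range we have the explicit formulas \eqref{e-46} and \eqref{e-47} for $\tan\beta_1$ and $\tan\beta_2$. Here I would substitute these into \eqref{e9-176}, namely $\tan\beta=\tan\beta_2+2\sqrt{\tan\beta_1(\tan\beta_2-\tan\beta_1)}$, and simplify. The point of the computation is that, because the main operator $T_i$ is $\theta$-sectorial with exact angle $\theta$ independent of $\alpha$, the combination $\tan\beta_2-\tan\beta_1$ is pinned down by $\theta$ (indeed $\beta_2-\beta_1=\theta$ by the last assertion of Theorem \ref{t-15}), which lets me re-express the radicand and arrive at the stated closed form \eqref{e6-6}, $\tan\beta=\tan\beta_1+2\sqrt{\tan\beta_1\tan\beta_2}$. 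I would also note the monotonicity in $\alpha$ — as $\alpha$ decreases toward $\arctan(-m_\infty(-0))$, $\tan\beta_1$ decreases to $0$ (from \eqref{e-46}) while $\tan\beta_2$ stays at $-\cot\alpha$-type values — so that \eqref{e6-6} indeed produces a genuine $\beta\in(0,\pi/2)$ for every $\alpha$ in the open interval $(\arctan(-m_\infty(-0)),\alpha_*)$.

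For the limiting assertion, I would invoke part (1) of Theorem \ref{t10-17-i}: at $\alpha=\arctan(-m_\infty(-0))$, i.e. $\tan\alpha=-m_\infty(-0)$, the associated operator $\ti\bA_{\tan\alpha,i}$ is $\beta$-sectorial with the \emph{same} exact angle as $T_i$, so $\beta=\theta$. It remains to check that $\theta=\arctan(1/m_\infty(-0))$; but at $\tan\alpha=-m_\infty(-0)$ formula \eqref{e-46} gives $\tan\beta_1=0$, hence $\beta_1=0$, and then $\beta=\beta_2-\beta_1=\beta_2$, while \eqref{e-47} gives $\tan\beta_2=-\cot\alpha=1/m_\infty(-0)$, yielding $\theta=\beta=\arctan(1/m_\infty(-0))$. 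Consistency with \eqref{e6-6} is automatic since $\tan\beta_1=0$ reduces the right-hand side to $0$ — which is the correct reading only in the degenerate sense, so I would treat the endpoint separately via Theorem \ref{t10-17-i} rather than by plugging into \eqref{e6-6}.

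The main obstacle I expect is purely the bookkeeping of the radical in step two: one must verify that $\tan\beta_1(\tan\beta_2-\tan\beta_1)$, when $\tan\beta_1$ and $\tan\beta_2$ are replaced by the rational-in-$(\tan\alpha,m_\infty(-0))$ expressions of \eqref{e-46}–\eqref{e-47}, collapses to $\tan\beta_1\tan\beta_2$ after using the relation $\beta_2-\beta_1=\theta$ that ties the two angles to the fixed sectoriality angle of $T_i$ — equivalently, that $\tan\beta_2-\tan\beta_1=\tan\beta_2$ does \emph{not} hold in general, so the identity \eqref{e6-6} genuinely encodes the constraint coming from $T_i$ rather than being a formal rewrite of \eqref{e9-176}. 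Getting the trigonometric addition formula for $\tan(\beta_2-\beta_1)$ to interact correctly with \eqref{e-46}–\eqref{e-47} is where care is needed; everything else is a direct citation of Theorems \ref{t-15} and \ref{t10-17-i} and of the $S^{-1,\beta_1,\beta_2}\Rightarrow\beta\text{-sectorial}$ implication recorded in Section \ref{s3}.
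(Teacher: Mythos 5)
Your endpoint argument (part (1) of Theorem \ref{t10-17-i} together with \eqref{e10-45}) matches the paper's, but the central step of your proposal --- obtaining \eqref{e6-6} from \eqref{e9-176} --- does not go through. You claim that after substituting \eqref{e-46}--\eqref{e-47} the radicand $\tan\beta_1(\tan\beta_2-\tan\beta_1)$ collapses to $\tan\beta_1\tan\beta_2$ once the relation $\beta_2-\beta_1=\theta$ is used. It does not: writing $t=\tan\alpha$ and $m=m_\infty(-0)$, one finds
$\tan\beta_2-\tan\beta_1=-\frac{1+t^2}{t(1-tm)}$ while $\tan\beta_1\tan\beta_2=-\frac{t+m}{t(1-tm)}$,
which coincide only when $m=1-t+t^2$; and even if the radicands agreed, the leading terms $\tan\beta_2$ in \eqref{e9-176} and $\tan\beta_1$ in \eqref{e6-6} would still differ. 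A numerical check makes the failure concrete: for $m=1$ and $\tan\alpha=-1/2$ one gets $\tan\beta_1=1/3$, $\tan\beta_2=2$, so \eqref{e9-176} yields $2+\tfrac{2}{3}\sqrt{5}\approx 3.49$ whereas \eqref{e6-6} yields $\tfrac13+2\sqrt{2/3}\approx 1.97$. So \eqref{e6-6} is not a formal rewrite of \eqref{e9-176}, and the identity your argument hinges on is false.

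The paper does not attempt this reduction. Its proof is essentially a citation: after noting $\tan\alpha_*\in(-m_\infty(-0),0)$, it invokes \cite[Theorem 8]{BT18} with $\mu=\tan\alpha$, which supplies both the $\beta$-sectoriality of $\ti\bA_{\tan\alpha,i}$ for all $-m_\infty(-0)\le\tan\alpha<\tan\alpha_*$ and the formula \eqref{e6-6} directly, with $\beta_1,\beta_2$ the angles attached to the \emph{fixed} $\alpha_*$ via \eqref{e-46}--\eqref{e-47}. This points to a second discrepancy in your reading: as the remark and Figure \ref{fig-3} following the theorem make explicit, $\beta$ in \eqref{e6-6} is a single, $\alpha$-independent ``universal'' angle valid for the whole family $\{\ti\bA_{\tan\alpha,i}\}$, whereas you compute an $\alpha$-dependent exact angle from \eqref{e9-176} for each $\alpha$ separately. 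To make your route work you would have to either reproduce the content of \cite[Theorem 8]{BT18}, or show that the $\alpha$-dependent angle produced by \eqref{e9-176} is dominated by the right-hand side of \eqref{e6-6} (evaluated at $\alpha_*$) uniformly over the interval; neither is done in your write-up.
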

\begin{proof}
We note first that the conditions of our theorem imply the following: $\tan\alpha_*\in\left(-m_{\infty}(-0),0\right)$. Thus, according to \cite[Theorem 8]{BT18} applied for $\mu=\tan\alpha$ the operator $\ti\bA_{\tan\alpha,i}$ is $\beta$-sectorial for some $\beta\in(0,\pi/2)$ for any $\alpha$ such that
$$-m_{\infty}(-0)\le\tan\alpha<\tan\alpha_*.$$
Formula \eqref{e6-6} also follows from the corresponding formula in \cite[Theorem 8]{BT18} taken into account that $\beta_1$ and $\beta_2$ are defined via \eqref{e-46} and \eqref{e-47}, respectively.
 Finally, since $T_i$ is $\theta$-sectorial, formula \eqref{e10-45} yields $\tan\theta=\frac{1}{m_{\infty}(-0)}$. Applying part (1) of Theorem \ref{t10-17-i} gives us that $\beta=\theta$. This completes the proof.
 \end{proof}
Note that Theorem \ref{t-16} provides us with a value $\beta$ which serves as a universal angle of sectoriality for the entire indexed family of   associated operators $\ti\bA$ of the form \eqref{e-27} as depicted on Figure \ref{fig-3}.

\begin{figure}
  \begin{center}
  \includegraphics[width=90mm]{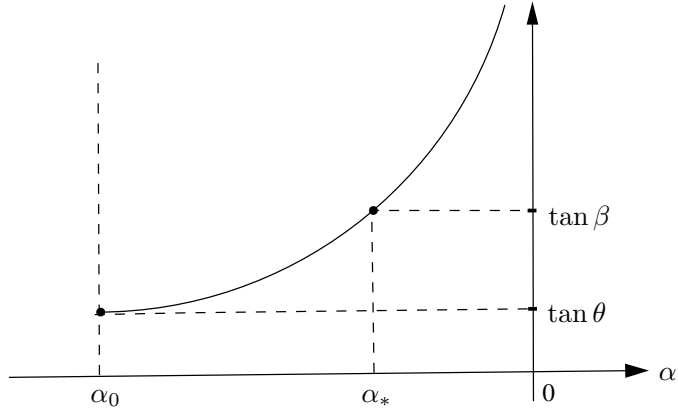}
  \caption{Angle of sectoriality $\beta$. Here $\alpha_0=\arctan(-m_{\infty}(-0))$.}\label{fig-3}
  \end{center}
\end{figure}

\section{Example}

Consider the differential expression with the Bessel potential
$$
l_\nu=-\frac{d^2}{dx^2}+\frac{\nu^2-1/4}{x^2},\;\; x\in [1,\infty)
$$
of order $\nu>0$ in the Hilbert space $\calH=L^2[1,\infty)$.
The minimal symmetric operator
\begin{equation}\label{ex-128}
 \left\{ \begin{array}{l}
 \dA\, y=-y^{\prime\prime}+\frac{\nu^2-1/4}{x^2}y \\
 y(1)=y^{\prime}(1)=0 \\
 \end{array} \right.
\end{equation}
 generated by this expression and boundary conditions has defect numbers $(1,1)$. 
Let $\nu=3/2$. It is known \cite{ABT} that in this case
$$
m_{\infty}(z)= 1-\frac{iz}{\sqrt{z}+i}
$$
and $m_{\infty}(-0)=1.$
The minimal symmetric operator then becomes
\begin{equation}\label{ex-e8-15}
 \left\{ \begin{array}{l}
 \dA\, y=-y^{\prime\prime}+\frac{2}{x^2}y \\
 y(1)=y^{\prime}(1)=0. \\
 \end{array} \right.
\end{equation}
Consider operator $T_h$ of the form \eqref{131} that is written for $h=i$ as 
\begin{equation}\label{ex-135}
 \left\{ \begin{array}{l}
 T_{i}\, y=-y^{\prime\prime}+\frac{2}{x^2}y \\
 y'(1)=i\, y(1) \\
 \end{array} \right..
\end{equation}
This operator $T_i$ will be shared as the main operator by  the family of L-systems realizing functions $(-m_\alpha(z))$ in  \eqref{e-62-psi}-\eqref{e-59-LFT}. It is accretive and $\beta$-sectorial since $\RE h=0>-m_\infty(-0)=-1$ and has the exact angle of sectoriality given by (see \eqref{e10-45})
\begin{equation}\label{e-ex-98}
\tan\beta=\frac{\IM h}{\RE h+m_{\infty}(-0)}=\frac{1}{0+1}=1\quad\textrm{ or}\quad \beta=\frac{\pi}{4}.
\end{equation}
 The family of  L-systems $\Theta_{\tan\alpha, i}$ of the form \eqref{e-64-sys} that realizes functions 
 \begin{equation}\label{e-ex-99}
  -m_\alpha(z)=\frac{({\sqrt{z}-iz+i})\cos\alpha+({\sqrt{z}+i})\sin\alpha}{({\sqrt{z}-iz+i})\sin\alpha-({\sqrt{z}+i})\cos\alpha},
 \end{equation}
  was constructed in \cite{BT18}.
 According to \eqref{e-39-ac-angles} the L-systems  $\Theta_{\tan\alpha, i}$ in \eqref{e-64-sys} are accumulative if
$$
 -1=-m_\infty(-0)\le\tan\alpha\le0.
$$
Applying  part (2) of Theorem \ref{t10-17-i}, we get that the realizing L-system $\Theta_{\tan\alpha, i}$ in \eqref{e-64-sys} is such that the associated operator $\ti\bA_{\tan\alpha,i}$ is extremal accretive if $\mu=\tan\alpha=0$ or $\alpha=0$.
Therefore the L-system
\begin{equation}\label{e-ex-105-sys}
    \Theta_{0, i}= \begin{pmatrix} \bA_{0, i}&K_{0, i}&1\cr \calH_+ \subset
L_2[1,+0) \subset \calH_-& &\dC\cr \end{pmatrix},
\end{equation}
where
\begin{equation}\label{e-ex-84}
\begin{split}
&\bA_{0,i}\, y=-y^{\prime\prime}+\frac{2}{x^2}y-i\,[y^{\prime}(1)-iy(1)]\,\delta^{\prime}(x-1), \\
&\bA^*_{0,i}\, y=-y^{\prime\prime}+\frac{2}{x^2}y+i\,[y^{\prime}(1)+iy(1)]\,\delta^{\prime}(x-1),
\end{split}
\end{equation}
$K_{0, i}{c}=cg_{0, i}$, $(c\in \dC)$ and $g_{0, i}=\delta^{\prime}(x-1)$.
This L-system $\Theta_{0, i}$  realizes the function $-m_{0}(z)=-m_\infty(z)$.
Also,
\begin{equation}\label{e-ex-108-VW}
    \begin{aligned}
    V_{\Theta_{0, i}}(z)&=-m_{0}(z)=-m_\infty(z)=\frac{iz}{\sqrt{z}+i}-1\\
     W_{\Theta_{0, i}}(z)&=-\frac{m_\infty(z)-i}{m_\infty(z)+i}=\frac{(i-1)\sqrt{z}+iz-1-i}{(1+i)\sqrt{z}-iz-1+i}.
     \end{aligned}
\end{equation}
The associate operator $\ti\bA_{0,i}$ is given by \eqref{e-27} as
$$
 \begin{aligned}
    \ti\bA_{0,i}\, y&=-y^{\prime\prime}+\frac{2}{x^2}y-y'(1)\delta(x-1)-y(1)\delta'(x-1) +[y(1)+iy'(1)]\,\delta'(x-1)\\
    &=-y^{\prime\prime}+\frac{2}{x^2}y-y'(1)[\delta(x-1)-i\delta'(x-1)].
    \end{aligned}
$$
The adjoint operator $\ti\bA_{0,i}$ is
$$
\ti\bA_{0,i}^* y=-y^{\prime\prime}+\frac{2}{x^2}y-y'(1)[\delta(x-1)+i\delta'(x-1)],
$$
and consequently
$$
\RE\ti\bA_{0,i}\, y=-y^{\prime\prime}+\frac{2}{x^2}y-y'(1)\delta(x-1)\quad\textrm{ and}\quad \IM \ti\bA_{0,i}\, y=y'(1)\delta'(x-1).
$$
The operator $\ti\bA_{0,i}$ above is accretive according to \cite{BT14} which is also independently confirmed by direct evaluation
$$
(\RE\ti\bA_{0,i}\, y,y)=\|y'(x)\|^2_{L^2}+2\|y(x)/x\|^2_{L^2}\ge0.
$$
Moreover,  according to Theorem \ref{t10-17-i} it is extremal, that is accretive but not $\beta$-sectorial for any $\beta\in(0,\pi/2)$. Indeed,
it is easy to see  that $$(\IM \ti\bA_{0,i}\, y,y)=-|y'(1)|^2,$$  and hence we can have inequality \eqref{e8-29} for all $y\in\calH_+$ only if $\beta=\frac{\pi}{2}$. Thus, this is the case of the extremal operator.
In addition, we have shown that  the  function $-m_{0}(z)=-m_\infty(z)=\frac{iz}{\sqrt{z}+i}-1$ in \eqref{e-ex-108-VW} belongs to the sectorial class $S^{-1,0,\frac{\pi}{2}}$ of inverse Stieltjes functions.


\end{document}